\newtheorem{thm}{Theorem}[section]
\newtheorem{lem}[thm]{Lemma}
\newtheorem{prop}[thm]{Proposition}
\theoremstyle{definition}
\newtheorem{defi}[thm]{Definition}
\theoremstyle{remark}
\newtheorem{rema}[thm]{Remark}
\def\relay#1#2{%
  \expandafter\def\csname #1\endcsname{#2}%
}
\def\makecal#1{%
	\relay{c#1}{\ensuremath{\mathcal{#1}}}%
}
\newcommand{\makebb}[1]{\relay{bb#1}{\ensuremath{\mathbb{#1}}}}
\forcsvlist{\makecal}{X,Y,K,N,R,F,Q,P,U}
\forcsvlist{\makebb}{R,N,C,Q,D,Z,F,T}
\newcommand{\makemathop}[1]{\expandafter\DeclareMathOperator\expandafter{\csname #1\endcsname}{#1}}
\forcsvlist{\makemathop}{id, coker, ev}
\newcommand{\e}{\varepsilon}
\newcommand{\torus}{{\bbT}^2} 
\newcommand{\torusn}[1][n]{(\torus)^{[#1]}}
\title{Almost Commuting Orthogonal Matrices}
\author{Terry A Loring}
\address{University of New Mexico, Department of Mathematics and Statistics, Albuquerque, New Mexico 87131, USA}
\email{loring@math.unm.edu}
\thanks{The first named author thanks the Simons foundation (CGM 208723)}
\author{Adam P W S{\o}rensen}
\address{Department of Mathematical Sciences, University of Copenhagen, Universitetsparken 5,
2100 Copenhagen \O, Denmark}
\curraddr{School of Mathematics and Applied Statistics, University of Wollongong,
Wollongong, NSW 2522, Australia}
\email{apws@math.ku.dk}
\thanks{The second named author was supported by The Danish Council for Independent Research \textbar Natural Sciences}
\date{}
\begin{document}

\begin{abstract}
We show that almost commuting real orthogonal matrices are uniformly close to exactly commuting real orthogonal matrices. 
We prove the same for symplectic unitary matrices.
This is in contrast to the general complex case, where not all pairs of almost commuting unitaries are close to commuting pairs. 
Our techniques also yield results about almost normal matrices over the reals and the quaternions.
\end{abstract}

\maketitle

\section{Introduction}

In \cite{HalmosSomeUnsolved} Halmos asked if two almost commuting self-adjoint matrices are necessarily close to two exactly commuting self-adjoint matrices. 
To make this question as interesting as possible we take ``almost'' and  ``close'' to be uniform across all matrix sizes. 
The question was answered when Lin proved in \cite{LinAlmostCommuting} that indeed every pair of almost commuting self-adjoint matrices is always close to a pair of exactly commuting self-adjoint matrices. 
Shortly thereafter Friis and R{\o}rdam gave a short proof of Lin's Theorem in \cite{FriisRordamAlmostCommuting}. 

Before Lin's solution a lot of work went into investigating similar problems. 
Davidson showed in \cite{DavidsonAlmostCommuting} that triples of almost commuting self-adjoint matrices need not be close to exactly commuting triples.
Voiculescu showed that pairs of almost commuting unitary matrices are not necessarily close to pairs of exactly commuting unitaries in \cite{VoicuelscuAsymptoticallyCommuting}. 
Exel and the first named author gave a short proof of Voiculescu's result in \cite{ExelLoringAlmostCommuting}.
The main idea in \cite{ExelLoringAlmostCommuting} is that if $U,V$ are almost commuting unitaries then the winding number of the path in $\bbC \setminus \{0\}$ given by
\begin{align} \label{windingNumberPath}
	t \mapsto \det((1-t)UV + tVU), \quad t \in [0,1],
\end{align}
measures the obstruction to $(U,V)$ being close to commuting unitaries. 
The winding number for a commuting pair is zero so the winding number also has to be zero for any pair that can be perturbed to a commuting pair.  

The winding number is also referred to as the Bott index, a name that highlights its connection to $K$-theory. 
By \cite{ELPPushingForward,GongLinAlmost} an almost commuting commuting pair of unitary matrices $U,V$ is close to a commuting pair of unitary matrices if and only if the Bott index of the pair is zero. 
So long as we measure noncommutativity via the operator norm, use complex scalars, don't worry about algorithms or quantitative results, we can end here the story on almost commuting unitary matrices.

The recent focus in condensed matter physics on systems with time-reversal and other anti-unitary symmetries, see for instance \cite{ryu2010topological}, resulted in a new chapter of this story to be written.
The unitary matrices $U$ and $V$ in that arose in this context satisfied new relations  $U^{\tau}=U$ and $V^{\tau}=V$ (\cite{LorHastHgTe}), where $\tau$ is either the transpose or the dual operation $\tau=\sharp$ given by 
\[
\left(\begin{array}{cc}
A & B\\
C & D
\end{array}\right)^{\sharp}=\left(\begin{array}{cc}
D^{\mathrm{T}} & -B^{\mathrm{T}}\\
-C^{\mathrm{T}} & A^{\mathrm{T}}
\end{array}\right).
\]

Thus came an intense focus on symmetric unitary matrices and self-dual unitary matrices that almost commute. There arose an index, the Pfaffian-Bott index, that is linked to the spin Chern number in certain two-dimensional topological insulators. 
Building on the work in \cite{LorHastHgTe}, we characterized when such pairs of almost commuting self-dual or symmetric unitaries can be perturbed to an exactly commuting pair in \cite{LoringSorensenTimeReversal}. 
From a physics point of view, this was a natural place to look.

In the present paper we continue the story in a different direction, by asking when a pair of almost commuting real-valued unitaries matrices, i.e. real orthogonal matrices, are close to exactly commuting real-valued unitaries. 
Unlike in the complex case we find that a pair of almost commuting real orthogonals are always close to an exactly commuting pair. 
With very little extra work, we also get results for symplectic unitaries. 

For a pair of real orthogonal matrices the winding number of the path (\ref{windingNumberPath}) is always zero. 
By results from \cite{ELPPushingForward, GongLinAlmost} this means that close to any pair of almost commuting real orthogonal there is a pair of exactly commuting unitary (though not necessarily real orthogonal) matrices. 
Unlike in the case of self-dual unitaries, it turns out that there is no new obstruction to finding real orthogonal approximates.  
Thus, our main theorem is:	

\begin{thm} \label{mainthm:perturbation}
For any $\e > 0$ there exists a $\delta > 0$ such that whenever and $U,V$ are real orthogonal matrices in $M_n$ with 
\[
	\|UV - VU\| \leq \delta,
\]
there exist real orthogonal matrices $U',V' \in M_n$ such that
\[
	\|U - U'\|, \|V - V'\| \leq \e \quad \text{ and } \quad U'V' = V'U'.
\]
The identical statement holds in the symplectic unitary case.
\end{thm}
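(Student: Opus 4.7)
The plan is to reduce to the complex commuting-approximation result recalled in the introduction, and then upgrade the commuting complex pair we obtain to a real orthogonal (respectively symplectic) one via a symmetrization step.

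\textbf{Step 1: the winding number vanishes automatically.} If $U, V \in M_n(\bbR)$ are real orthogonal, then $(1-t)UV + tVU$ is a real matrix for every $t \in [0,1]$, so the determinant in (\ref{windingNumberPath}) is real-valued. Provided $\|UV - VU\|$ is small enough that this determinant avoids $0$, the path sits entirely in $\bbR \setminus \{0\}$ and therefore has winding number $0$ in $\bbC \setminus \{0\}$. Invoking \cite{ELPPushingForward, GongLinAlmost}, for every $\eta > 0$ there exists $\delta_1(\eta) > 0$ such that $\|UV - VU\| \leq \delta_1$ yields commuting unitaries $W_1, W_2 \in M_n(\bbC)$ with $\|U - W_1\|, \|V - W_2\| \leq \eta$. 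However, these $W_i$ need not be real.

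\textbf{Step 2: symmetrize under complex conjugation.} Let $\tau$ denote complex conjugation on $M_n(\bbC)$, an antilinear $*$-automorphism whose fixed-point algebra is $M_n(\bbR)$. The original pair $(U, V)$ is $\tau$-invariant, so $(\overline{W_1}, \overline{W_2})$ is another commuting unitary pair within distance $\eta$ of $(U, V)$. The strategy is to construct from $(W_1, W_2)$ a $\tau$-invariant commuting unitary pair $(U', V')$ still at distance $O(\eta)$ from $(U, V)$; $\tau$-invariance forces $U', V' \in M_n(\bbR)$, which finishes the proof. Concretely I would simultaneously diagonalize $W_1, W_2$, read off their joint spectrum as a multiset in $\torus$, use closeness of $(W_1, W_2)$ to the real pair $(U, V)$ to verify that this multiset is nearly $\tau$-symmetric (non-real joint-eigenvalues come in conjugate pairs with matched multiplicities), pair up conjugate joint-eigenspaces, and reassemble $(U', V')$ as a direct sum of $1 \times 1$ blocks over $\tau$-fixed points and $2 \times 2$ real orthogonal blocks over each conjugate pair. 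The symplectic case should go through along the same lines, with $\tau$ replaced by the antilinear quaternionic involution $\sharp$ and with a Pfaffian-type identity replacing reality of the determinant in (\ref{windingNumberPath}).

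\textbf{Expected main difficulty.} The symmetrization of Step 2 is where essentially all of the real work lies. Averaging, e.g.\ replacing $W_i$ by $\tfrac12(W_i + \overline{W_i})$, destroys both unitarity and the commutation relation, so one must carefully compare the joint spectra of $(W_1, W_2)$ and $(\overline{W_1}, \overline{W_2})$, match conjugate joint-eigenspaces via a small unitary correction, and show that the correction moves $W_1, W_2$ only by $O(\eta)$ in operator norm. The cleanest abstract packaging of this step would be a real/equivariant semi-projectivity result for the universal $C^*$-algebra of a commuting pair of unitaries, namely $C(\torus)$; establishing such an equivariant lifting result is presumably the heart of the paper.
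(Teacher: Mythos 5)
Your Step 1 is correct and matches the paper's Lemma~4.2 essentially verbatim. The divergence, and the gap, is in Step 2.

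You propose to take an arbitrary commuting complex pair $(W_1,W_2)$ near $(U,V)$ supplied by \cite{ELPPushingForward, GongLinAlmost}, and then symmetrize it. The difficulty you identify (``match conjugate joint-eigenspaces via a small unitary correction'') is the whole problem, and your sketch does not show it can be solved. The obstacle is the absence of spectral gaps: if the joint spectrum of $(W_1,W_2)$ is densely spread over $\torus$, spectral projections onto regions are not norm-continuous in $(W_1,W_2)$, so the estimate $\|W_i-\overline{W_i}\|\le 2\eta$ does not translate into closeness of the relevant eigenprojections. Concretely, perturbing the joint eigenvalue multiset to be exactly conjugation-symmetric is fine (Weyl-type estimates), but after that you still have a commuting pair whose eigenbasis need not be close to any conjugation-invariant basis, and there is no a priori bound on the unitary that realigns it. The pointwise fix $W_i\mapsto\mathrm{Re}(W_i)$ followed by polar decomposition does produce a nearby real orthogonal for each $W_i$ separately, but destroys commutativity. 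So as written, Step 2 is an honest statement of the problem, not a solution to it.

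Also, your final remark overstates what is available: $C(\torus)$ is not semiprojective, equivariantly or otherwise, so there is no clean packaging of Step 2 as a semiprojectivity statement for $C(\torus,\tau)$. The paper proves only a weaker, tailored lifting result (Theorem~\ref{mainthm:lifting}) into $\prod M_{d_n}/\bigoplus M_{d_n}$, and it does so by a route that avoids your difficulty entirely: rather than finding a complex commuting pair first and trying to ``realify'' it afterward, it works from the outset in the $C^{*,\tau}$ category, perforates $\torus$ in conjugate pairs of holes via the pushout machinery of Theorem~\ref{thm:realExtension} (with the K-theory input supplied by exactly your Step~1 via Proposition~\ref{prop:noKTheory}), retracts to a grid, perforates again, and lands on a finite set whose $C^{*,\tau}$-algebra is genuinely semiprojective. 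This also gives Theorem~\ref{thm:Lin's} essentially for free, which the diagonalization route would not.
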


Following past work we will give a lifting solution to the perturbation problem. 
To properly state the relevant lifting problem we will use the theory of real $C^*$-algebras. 
We take the point of view that a real $C^*$-algebra is a $C^*$-algebra endowed with a map that acts like the transpose on matrices. 
We call such algebras $C^{*,\tau}$-algebras, and it is in the category of such algebras we prove lifting theorems.  

Throughout this paper we will use the terminology introduced in \cite{LoringSorensenLinsTheorem}.

Our main technical advance is Theorem~\ref{thm:realExtension} which concerns the interplay of ideals and the real structure.
When we have appropriate symmetry conditions Theorem~\ref{thm:realExtension} allows us to invoke complex lifting results in the real case. 
This way we avoid real $K$-theory, and so our techniques can be used in a fairly broad context.
What is critical is the absence of complex $K$-theoretic obstructions and that the underlying commuting situation corresponds to a two-dimensional CW complex with an involution that has only a zero- or one-dimensional set of fixed points.
We prove Theorem~\ref{mainthm:perturbation} by studying the torus with a specific rotation (see Definition~\ref{def:rotationReflection}). 
If we instead consider a disc with an involution that flips elements across the $x$-axis we obtain an additional real version of Lin's Theorem: 

\begin{thm} \label{thm:Lin's}
For any $\e > 0$ there exists a $\delta > 0$ such that whenever $X$ is a real matrix in $M_n$ with $ \| X \| \leq 1 $ and
\[
	\|XX^* - X^*X\| \leq \delta,
\]
there exists a normal real matrix $X' \in M_n$ such that $ \| X' \| \leq 1 $ and
\[
	\|X - X'\| \leq \e .
\]
The identical statement holds for quaternionic matrices.
\end{thm}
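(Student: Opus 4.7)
My plan is to recast Theorem~\ref{thm:Lin's} as a lifting problem for $C^{*,\tau}$-algebras and to deduce it from Theorem~\ref{thm:realExtension} combined with the original (complex) Lin's Theorem. A real contractive matrix $X\in M_n$ is the same data as a $*$-homomorphism $\varphi\colon C(\bbD)\to M_n$ sending the coordinate function $z$ to $X$, where $\bbD$ is the closed unit disc equipped with the involution $z\mapsto \bar z$; on $C(\bbD)$ the corresponding transpose-like map is $f^{\tau}(z)=f(\bar z)$, while on $M_n$ one takes $\tau$ to be the ordinary transpose. Realness of $X$ corresponds to $\varphi$ being a $C^{*,\tau}$-morphism, and almost normality of $X$ (with $\|X\|\le 1$) corresponds to $\varphi$ being an approximate one. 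Passing to an ultraproduct in the standard fashion, Theorem~\ref{thm:Lin's} becomes the assertion that every $C^{*,\tau}$-morphism from $C(\bbD)$ to the quotient $\prod M_{n_k}/\bigoplus M_{n_k}$ lifts to $\prod M_{n_k}$.

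The crux is that the involution $z\mapsto\bar z$ on $\bbD$ has fixed-point set $[-1,1]$, which is one-dimensional, while $\bbD$ itself is a two-dimensional CW complex. This is precisely the geometric situation, highlighted in the introduction, in which Theorem~\ref{thm:realExtension} applies, so I can reduce the real lifting problem above to the purely complex one: every $*$-homomorphism $C(\bbD)\to\prod M_{n_k}/\bigoplus M_{n_k}$ lifts. Since $C(\bbD)$ is the universal $C^*$-algebra on a normal contraction, this last statement is a standard reformulation of Lin's Theorem \cite{LinAlmostCommuting, FriisRordamAlmostCommuting}.

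The step I expect to be the main obstacle is verifying, with care, the hypotheses of Theorem~\ref{thm:realExtension} in this concrete setting: one has to identify the relevant ideal in the ultraproduct picture, check that the $\tau$-structures on source and target are compatible in the way required, and ensure that the fixed-point subalgebra of $C(\bbD)$ (functions real on $[-1,1]$) matches the real-structure data on the matrix side. Once this matching is in place, the quaternionic version of Theorem~\ref{thm:Lin's} follows with no substantive change, using the dual operation $\sharp$ on $M_{2m}$ from the introduction in place of the transpose on $M_n$; the underlying geometry is the same disc with the same reflection, so the one-dimensional fixed-point set keeps us within the scope of Theorem~\ref{thm:realExtension}, and the complex input is still Lin's original theorem.
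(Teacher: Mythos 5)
Your proposal gets the recasting as a $C^{*,\tau}$-lifting problem for $C(\bbD)$ with the involution $z\mapsto\bar z$ right, and the observation that the fixed-point locus $[-1,1]$ is one-dimensional is indeed the key geometric fact. But the central step, reducing the \emph{entire} real lifting problem to the complex one by a single appeal to Theorem~\ref{thm:realExtension} and then invoking Lin's theorem, is not supported by that theorem, and this is a genuine gap rather than a matter of ``verifying hypotheses with care.'' Theorem~\ref{thm:realExtension} handles a very specific pushout picture: an exact row $0\to A\oplus A^{\mathrm{op}}\to B\to D\to 0$ whose ideal splits as $A\oplus A^{\mathrm{op}}$ with the swap involution, i.e.\ an ideal that is entirely disjoint from the fixed-point set. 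It lets you trade one real \emph{extension} step (along a hole-collapsing map $\beta$) for the corresponding complex one. There is no diagram with $B=C(\bbD,\tau)$ and $B_1=\prod M_{n_k}$ to which the theorem applies, because (a) lifting along the quotient $\prod M_{n_k}\to\prod M_{n_k}/\bigoplus M_{n_k}$ is not an extension along a proper $*$-$\tau$-map of commutative $C^{*,\tau}$-algebras, and (b) any global ideal of $C(\bbD,\tau)$ that would control the whole problem necessarily meets the fixed line and so cannot be put in the form $A\oplus A^{\mathrm{op}}$ with the swap. Put differently, Theorem~\ref{thm:realExtension} only opens holes \emph{in pairs away from the fixed locus}; it says nothing about what happens near $[-1,1]$, which is exactly where the real/complex distinction bites.

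The paper's actual argument does not use complex Lin's theorem as an input at all. Instead it iterates Theorem~\ref{thm:realExtension} to punch pairs of holes (with symmetric locations) into $[-1,1]\times[-1,1]$, using the complex two-cell extension result (Proposition~\ref{prop:twoDimExtension}), noting that the contractibility of the square kills the $K$-theory obstruction. It then retracts onto a one-dimensional grid, opens gaps in the edges that come in swapped pairs (Theorem~\ref{thm:oneDimHoles}), and is left with a fixed line together with pairs of swapped points; the proof finishes by semiprojectivity of the corresponding $C^{*,\tau}$-algebra, not by Lin's theorem. If you want to salvage your plan, you would need to replace the one-shot reduction with this iterative hole-opening and retraction scheme and argue that the residual symmetric one-dimensional space is semiprojective in the $C^{*,\tau}$-category; as written, the reduction to Lin's theorem simply does not follow from Theorem~\ref{thm:realExtension}.
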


Commutativity is just one relation for which we can examine the approximate version. 
We can also ask about almost anti-commuting unitary matrices, for example.
Both of these questions can be studied by looking at the class of $C^{*}$-algebras called noncommutive CW complexes (NCCW complexes).
Glossing over the technical details, the general story about NCCW complexes and approximate relations in is as follow:
If relations corresponds to a one- or two-dimensional NCCW complex, then complex matrices that almost satisfy these relations can be perturbated to ones that exactly satisfy the relations, if and only if the invariants coming from the $K$-theory of the 2-cells vanish (\cite{ELPPushingForward}).

Keeping the connection to physics in mind we might wish to define a real $C^*$-algebra version of NCCW complexes and use them to study perturbation problems for matrices.
The number of possible generalization here is surprisingly large.
An example of a two-cell in an NCCW complex is $C(\bbD) \otimes M_n(\bbC)$, where $\bbD$ denotes the unit disc.
To put an involution on this we replace $M_n(\mathbb{C})$ with a simple, finite-dimensional $C^{*,\tau}$-algebra, so one of
\[
	(M_{n}(\mathbb{C}),T ), \quad (M_{2n}(\mathbb{C}), \sharp), \quad \text{ or } \quad (M_{n}(\mathbb{C})\oplus M_{n},(\mathbb{C}),\zeta)
\]
where $\zeta(A,B)=(B^T,A^T)$.
If one prefers to think of real $C^*$-algebras not as $C^*$-algebras with extra structure, but as algebras over $\bbR$, then this is equivalent to choosing 
\[
	M_{n}(\mathbb{R}), \quad M_{n}(\mathbb{H}), \quad \text{ or } \quad M_{n}(\mathbb{C}),
\]
as our finite dimensional algebra. 
We must also put an involution on the disc and there are at least three options; the fixed points can be a single point, a line, or the whole space.
So even in this simple case, we are staring at $8$ generalizations.  Theorem~\ref{thm:Lin's} is a basic theorem about one of these generalized
two-cells.
More examples are needed to guide us safely forward.

We believe that our techniques can be useful in a more general study of real NCCW complexes, whatever the final definition, and while we chose our main theorems for their nice nice and easy statements in linear algebra, they can act as useful examples of the behavior we want real NCCW complexes to display.

\section{Recasting the problem}

Denote by $\torus$ the two-torus and recall that $C(\torus)$ is the universal $C^*$-algebra generated by two commuting unitaries.

\begin{defi} \label{def:rotationReflection}
Let $\tau$ be the unique reflection on $C(\torus)$ such that $u^\tau = u^*$ and $v^\tau = v^*$, where $u,v$ are the universal unitaries generating $C(\torus)$.
We call this reflection the \emph{the rotation reflection} 
\end{defi}

\begin{figure}
\begin{center}
\includegraphics[clip,scale=1.5]{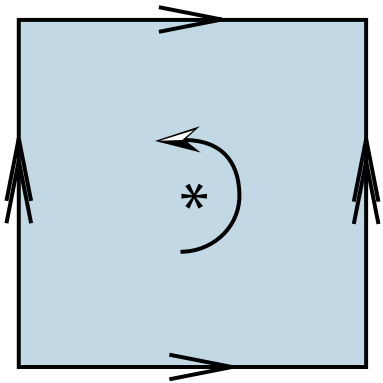}
\caption{
The torus with involution that rotates the usual square model by 180 degrees.
The left and right edges are identified, as are the top and bottom, so there is
a second fixed point represented by any of the corners in the model.
\label{fig:torus_tau} 
}
\end{center}
\end{figure}

The rotation reflection can be described topologically by thinking of the torus as the square with opposite sides identified. 
Then the reflection is simply rotation around the center point by $180$ degrees,
as illustrated in Figure~\ref{fig:torus_tau}.

The lifting theorem we aim to prove is the following:

\begin{thm} \label{mainthm:lifting}
Let $(d_n)$ be a sequence of natural numbers, and let $\tau$ be the rotation reflection on $C(\torus)$. 
Let $\tau_0$ denote either the transpose operation or the dual operation on matrices, the latter only allowed if all the $d_n$ are even.
For every $*$-$\tau$-homomorphism 
\[
\phi \colon C(\torus, \tau) \to \prod (M_{d_n}, \tau_0) / \bigoplus (M_{d_n}, \tau_0)
\]
there exists a $*$-$\tau$-homomorphism $\psi \colon C(\torus, \tau) \to \prod (M_{d_n}, \tau_0)$ such that 
\[
	\pi \circ \psi = \phi,
\]
where $\pi \colon \prod M_{d_n} \to \prod M_{d_n} / \bigoplus M_{d_n}$ is the quotient map.   
\end{thm}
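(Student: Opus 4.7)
The plan is to first produce a complex lift of $\phi$ that forgets the $\tau$-structure, and then upgrade it to a $\tau$-equivariant lift by invoking Theorem~\ref{thm:realExtension}. Unpacking, the data of $\phi$ is the same as a commuting pair of unitaries $u,v$ in the quotient $\prod(M_{d_n},\tau_0)/\bigoplus(M_{d_n},\tau_0)$ subject to the symmetry relations $u^{\tau_0}=u^*$ and $v^{\tau_0}=v^*$. The goal is to produce commuting unitaries $U,V\in\prod M_{d_n}$ satisfying the same symmetry relations and reducing modulo $\bigoplus M_{d_n}$ to $(u,v)$.

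First I would ignore the $\tau$-structure and lift $\phi$ to a plain $*$-homomorphism $\psi_0\colon C(\torus)\to \prod M_{d_n}$. By \cite{ELPPushingForward,GongLinAlmost} the obstruction to such a lift is precisely the Bott index, i.e.\ the winding number of the path (\ref{windingNumberPath}) evaluated on any pair of almost commuting unitary lifts of $u,v$. The symmetry relations force this winding number to vanish: for any almost commuting unitary lift $(U_0,V_0)$, applying $\tau_0$ to the path $t\mapsto(1-t)U_0V_0+tV_0U_0$ produces a symmetry in its determinant that forces the winding number to be zero, paralleling the observation in the introduction that for genuine real orthogonal matrices the path already lies in $\bbR\setminus\{0\}$. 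Thus a complex lift $\psi_0$ exists.

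Next, with $\psi_0$ in hand, I would apply Theorem~\ref{thm:realExtension} to replace $\psi_0$ by a $\tau$-equivariant lift $\psi$ that still reduces to $\phi$ modulo $\bigoplus(M_{d_n},\tau_0)$. This is the central step, and the place where the specific geometry of the rotation reflection matters: the fixed-point set of $\tau$ on $\torus$ consists only of the four corners $(\pm 1,\pm 1)$, hence is zero-dimensional, placing the problem squarely within the scope advertised in the introduction for Theorem~\ref{thm:realExtension} (vanishing complex $K$-theoretic obstructions together with a zero- or one-dimensional set of fixed points).

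The main obstacle I anticipate lies in this last step: setting up Theorem~\ref{thm:realExtension} uniformly for both the transpose ($\tau_0=T$) and dual ($\tau_0=\sharp$) cases, and verifying its hypotheses in our specific setting, including the parity constraint on the $d_n$ when $\tau_0=\sharp$. The Bott-vanishing argument is by contrast fairly routine once the action of $\tau_0$ on the determinant path is written out carefully, and the reduction of the lifting statement for $C(\torus,\tau)$ to a symmetric commuting pair of unitaries is purely formal.
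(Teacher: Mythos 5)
Your proposal misreads what Theorem~\ref{thm:realExtension} actually provides, and this breaks the whole argument. That theorem is an \emph{extension} result in the category of $C^{*,\tau}$-algebras: it takes a $*$-$\tau$-homomorphism $\phi\colon B\to E$ and produces $\psi\colon B_1\to E$ along a map $\beta\colon B\to B_1$, under the hypothesis that there is a commutative diagram with exact rows whose ideals decompose as $A\oplus A^{\mathrm{op}}$ and $A_1\oplus A_1^{\mathrm{op}}$ with the flip reflection, $\theta\colon A\to A_1$ is proper, and the relevant complex extension $\lambda\colon A_1\to E$ exists. Nothing in this theorem converts a complex $*$-homomorphism into a $\tau$-equivariant one, and in your proposed second step there is no pair of exact rows with ideals of the required form to feed it. More fundamentally, there is no general mechanism to upgrade a complex lift $\psi_0\colon C(\torus)\to\prod M_{d_n}$ to a $\tau$-equivariant one: averaging $\psi_0$ against its $\tau$-conjugate destroys multiplicativity, and the whole difficulty of the paper is precisely that such a shortcut is unavailable.

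What the paper actually does never leaves the $C^{*,\tau}$-category and never produces a naked complex lift of $\phi$. It opens holes in $\torus$ at $\tau$-paired points $x_i,y_i$ using Theorem~\ref{thm:twoDimHoles}; there Theorem~\ref{thm:realExtension} \emph{is} used, but with $B=C(\torus)$, $B_1=C(\torusn[2])$, $A=C_0(\cU)$, $A_1=C_0$ of a half-open annulus, and crucially with target $E=Q=\prod M_{d_n}/\bigoplus M_{d_n}$, not with the sequence algebra --- the extensions all happen in the corona. The Bott-index vanishing you noted appears in the form of Proposition~\ref{prop:noKTheory}, used to verify the complex $K$-theory hypothesis $K_0(\phi\circ\iota)=0$ needed by Proposition~\ref{prop:twoDimExtension} to open each pair of holes; it is not used to build a complex lift of $\phi$. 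Only after iterated hole-opening, retracting onto a grid $\Gamma_{2n}$, opening gaps in the grid via Theorem~\ref{thm:oneDimHoles}, and retracting onto a finite discrete set is an actual lift produced, and it is produced via semiprojectivity of the resulting finite-dimensional $C^{*,\tau}$-algebra (direct sums of $(\bbC,\id)$ and $\bbC\oplus\bbC$ with the flip). Your proposal is missing the entire hole-opening/retraction strategy, which is the substance of the proof.
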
  

The equivalence of Theorem \ref{mainthm:perturbation} and Theorem \ref{mainthm:lifting} is proved by well-known methods.

Our strategy for proving Theorem \ref{mainthm:lifting} follows the beaten path. 
First we will ``open up holes'' in the torus to produce an
``$n$-holed torus'' with involution, which then we will retract onto a one dimensional CW complex that looks like a grid, again with an involution. 
Past works have had the advantage of knowing that the relevant one dimensional CW complex are semipropjective. 
In our case, we do not know that, so we proceed as Friis and R{\o}rdam and ``open up holes'' in the grid, then we retract onto a finite set of points. 
Now we can use semiprojectivity to solve our lifting problem.   

\section{From the complex to the real case} 

The aim of this section is to develop a method to move results on extending $*$-homomorphisms from the category of $C^*$-algebras to the category of $C^{*,\tau}$-algebras.
Our main tool will be the following theorem. 

\begin{thm} \label{thm:realExtension}
Let $\theta \colon A \to A_1$ be a proper $*$-homomorphism. 
Suppose we have the following commutative diagram of $C^{*,\tau}$-algebras with exact rows:
\[
	\xymatrix{
		0 \ar[r] & A_1 \oplus A_1^{\mathrm{op}} \ar[r]^-{\iota_1} & B_1 \ar[r] & D \ar[r] & 0 \\
		0 \ar[r] & A \oplus A^{\mathrm{op}} \ar[u]^{\bar{\theta}} \ar[r]_-{\iota} & B \ar[u]^{\beta} \ar[r] & D \ar@{=}[u] \ar[r] & 0
	}
\]
where the $\tau$-operation on $A \oplus A^{\mathrm{op}}$ and $A_1 \oplus A_1^{\mathrm{op}}$ flips the summands, and $\bar{\theta}$ is given by $\bar{\theta}(a_1, a_2) = (\theta(a_1), \theta(a_2))$.
Given a $*$-$\tau$-homomorphism $\phi \colon B \to E$ there exists a $*$-$\tau$-homomorphism $\psi \colon B_1 \to E$ such that $\psi \circ \beta = \phi$ if there exists a $*$-homomorphism $\lambda \colon A_1 \to E$ with $\lambda(\theta(a)) = \phi(\iota(a,0))$ for all $a \in A$. 
\end{thm}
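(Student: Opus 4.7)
The plan is to construct $\psi$ in two stages: first assemble a $*$-$\tau$-homomorphism $\bar\lambda \colon A_1 \oplus A_1^{\mathrm{op}} \to E$ out of $\lambda$ and its $\tau$-conjugate, and then amalgamate $\bar\lambda$ with $\phi$ using the pushout structure of the diagram (both rows are extensions of the same $D$, and $\beta$ restricts to $\bar\theta$ on the ideals). The natural candidate is $\bar\lambda(c,d) = \lambda(c) + \lambda(d)^\tau$, where $d \mapsto \lambda(d)^\tau$ is a $*$-anti-homomorphism $A_1 \to E$, equivalently a $*$-homomorphism $A_1^{\mathrm{op}} \to E$. Since $\phi$ is $\tau$-equivariant and the flip on $A \oplus A^{\mathrm{op}}$ sends $(a,0)$ to $(0,a)$, I immediately get $\phi(\iota(0,a)) = \phi(\iota(a,0))^\tau = \lambda(\theta(a))^\tau$; hence the compatibility $\bar\lambda \circ \bar\theta = \phi \circ \iota$ is automatic from the hypothesis on $\lambda$, and $\bar\lambda$ is $\tau$-equivariant by construction.

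The key technical point, and what I expect to be the main obstacle, is that $\bar\lambda$ must be multiplicative, i.e.\ that $\lambda(A_1)$ commutes with $\lambda(A_1)^\tau$ in $E$. On the image of $\theta$ even more is true: since $(a,0)(0,b) = 0 = (0,b)(a,0)$ in $A \oplus A^{\mathrm{op}}$, applying $\phi \circ \iota$ gives $\lambda(\theta(a))\lambda(\theta(b))^\tau = 0 = \lambda(\theta(b))^\tau \lambda(\theta(a))$ for all $a,b \in A$, so $\lambda(\theta(A))$ and $\lambda(\theta(A))^\tau$ are mutually orthogonal rather than merely commuting.

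Propagating this mutual orthogonality from $\theta(A)$ to all of $A_1$ is exactly where properness of $\theta$ is used. Because $\{\theta(e_\alpha)\}$ is an approximate unit of $A_1$ whenever $\{e_\alpha\}$ is one of $A$, applying $\lambda$ and the continuity of $\tau$ to the convergences $c\theta(e_\alpha) \to c$ and $d\theta(e_\beta) \to d$ yields $\lambda(c) = \lim_\alpha \lambda(c)\lambda(\theta(e_\alpha))$ and $\lambda(d)^\tau = \lim_\beta \lambda(\theta(e_\beta))^\tau \lambda(d)^\tau$ for arbitrary $c, d \in A_1$. Substituting into $\lambda(c)\lambda(d)^\tau$ produces an iterated limit of products $\lambda(c)\lambda(\theta(e_\alpha))\lambda(\theta(e_\beta))^\tau \lambda(d)^\tau$ whose middle two factors vanish identically by the previous paragraph, so $\lambda(c)\lambda(d)^\tau = 0$ for all $c, d \in A_1$; the same argument gives $\lambda(d)^\tau \lambda(c) = 0$. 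In particular, $\bar\lambda$ is a bona fide $*$-$\tau$-homomorphism.

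With $\bar\lambda$ in place, I amalgamate: every $b_1 \in B_1$ decomposes as $\beta(b) + \iota_1(x)$ for some $b \in B$ and $x \in A_1 \oplus A_1^{\mathrm{op}}$, and I set $\psi(\beta(b) + \iota_1(x)) = \phi(b) + \bar\lambda(x)$. Well-definedness reduces to $\phi \circ \iota = \bar\lambda \circ \bar\theta$ together with exactness of the rows and injectivity of $\iota_1$, while multiplicativity is the standard pushout property for extensions, using that the bimodule action of $B_1$ on its ideal is induced via $\beta$ from the bimodule action of $B$ on its ideal. The identity $\psi \circ \beta = \phi$ and the $\tau$-equivariance of $\psi$ are then immediate.
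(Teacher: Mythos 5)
Your proposal follows essentially the same strategy as the paper's: build $\bar\lambda(c,d)=\lambda(c)+\lambda(d)^\tau$, prove $\lambda(A_1)\perp\lambda(A_1)^\tau$ using properness, and then amalgamate $\bar\lambda$ with $\phi$ along the pushout square. The paper factors this into a separate lemma (Proposition~\ref{opSumExtension}) and proves the orthogonality via Pedersen's factorization theorem (writing elements as $x_i\theta(a_i)$), whereas you use the approximate-unit formulation of properness and a double-limit; these are interchangeable and both correct. The one place your write-up is thinner than it should be is the last step: multiplicativity of $\psi$ on cross-terms $\beta(b)\iota_1(x)$ is not a formal consequence of exactness and injectivity of $\iota_1$ — it again uses properness of $\bar\theta$ (to write $x=\bar\theta(a)y$ and pull $\beta(b)$ through $\iota_1$). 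The paper does not re-derive this; it first records that $\bar\theta$ is proper (Lemma~\ref{opSumHom}) and then cites \cite[Theorem~5.4]{LoringSorensenTimeReversal}, which identifies the left square as a pushout in the proper case. So your phrase ``the standard pushout property'' is pointing at the right fact, but you should either cite that theorem or supply the properness-based calculation that makes $\psi$ multiplicative.
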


The proof requires two auxiliary results. 

\begin{lem} \label{opSumHom}
Suppose $\theta \colon A \to B$ is a $*$-homomorphism, then  $\bar{\theta} \colon A \oplus A^{\mathrm{op}} \to B \oplus B^{\mathrm{op}}$ given by $\bar{\theta}(a_1, a_2) = (\theta(a_1), \theta(a_2))$ is a $*$-$\tau$-homomorphism, when both $A \oplus A^{\mathrm{op}}$ and $B \oplus B^{\mathrm{op}}$ are given the $\tau$-operation that flips the two summands. 
Furthermore, if $\theta$ is proper then $\bar{\theta}$ is proper. 
\end{lem}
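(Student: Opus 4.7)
The plan is straightforward and essentially reduces to checking definitions componentwise: verify that $\bar{\theta}$ intertwines the relevant $\tau$-operations, then deduce properness from the componentwise form of $\bar{\theta}$.

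First I would pin down the structure on $A \oplus A^{\mathrm{op}}$. The product is $(a_1,a_2)(b_1,b_2)=(a_1b_1,b_2a_2)$, the involution is $(a_1,a_2)^{*}=(a_1^{*},a_2^{*})$, and the flip $(a_1,a_2)^{\tau}=(a_2,a_1)$ is proposed as the $\tau$-operation. A quick check confirms $\tau$ is a period-$2$ $*$-anti-automorphism: involutivity is obvious, compatibility with $*$ is immediate, and the anti-multiplicativity
\[
((a_1,a_2)(b_1,b_2))^{\tau}=(a_1b_1,b_2a_2)^{\tau}=(b_2a_2,a_1b_1)=(b_2,b_1)(a_2,a_1)=(b_1,b_2)^{\tau}(a_1,a_2)^{\tau}
\]
holds precisely because the second summand carries the opposite product. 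The same formulas work on $B \oplus B^{\mathrm{op}}$.

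Next I would verify that $\bar{\theta}$ respects these structures. That $\bar{\theta}$ is a $*$-homomorphism is clear, since $\theta$ preserves all the operations on each summand (including the opposite product in the second summand, because if $\theta(ab)=\theta(a)\theta(b)$ then also $\theta(ba)=\theta(b)\theta(a)$). The $\tau$-compatibility is a one-line computation:
\[
\bar{\theta}((a_1,a_2)^{\tau})=\bar{\theta}(a_2,a_1)=(\theta(a_2),\theta(a_1))=(\theta(a_1),\theta(a_2))^{\tau}=(\bar{\theta}(a_1,a_2))^{\tau}.
\]

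For properness, note that if $(e_\lambda)$ is an approximate unit for $A$ whose image $(\theta(e_\lambda))$ is an approximate unit for $B$, then $(e_\lambda,e_\lambda)$ is an approximate unit for $A \oplus A^{\mathrm{op}}$, and its image under $\bar{\theta}$ is $(\theta(e_\lambda),\theta(e_\lambda))$, which is an approximate unit for $B \oplus B^{\mathrm{op}}$. Equivalently, at the level of multiplier algebras, $\bar{\theta}$ is just the direct sum of the induced proper maps on each summand.

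The only real obstacle is bookkeeping: keeping straight the opposite-algebra conventions alongside the flip, so that the reversed product in the second coordinate and the anti-multiplicativity of the flip combine correctly. Once the formulas above are fixed, nothing else is substantive; the lemma is a foundational sanity check that sets up the diagram of Theorem~\ref{thm:realExtension}.
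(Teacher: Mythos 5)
Your proof is correct and follows essentially the same path as the paper: a componentwise verification that $\bar{\theta}$ is a $*$-homomorphism, a direct check that it intertwines the two flip involutions, and then properness. The only genuine difference is in the properness step, where the paper invokes Cohen--Pedersen factorization to write $b_1=\theta(a_1)x_1$ and $b_2=x_2\theta(a_2)$ (the reversed order in the second slot matching the opposite product) and concludes $\bar{\theta}(A\oplus A^{\mathrm{op}})(B\oplus B^{\mathrm{op}})=B\oplus B^{\mathrm{op}}$, whereas you use the equivalent characterization via approximate units; both are standard and the two arguments are interchangeable here. One small phrasing issue: your ``if $(e_\lambda)$ is an approximate unit for $A$ whose image is an approximate unit for $B$'' should read as an assertion that this holds for every (or some fixed) approximate unit because $\theta$ is proper, not as a conditional hypothesis -- but the intended content is clear and correct.
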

\begin{proof}
It is well known that $\theta$ defines a $*$-homomorphism from $A^{\mathrm{op}}$ to $B^{\mathrm{op}}$, so $\bar{\theta}$ is the direct sum of two $*$-homomorphisms and hence a $*$-homomorphism. 
As the $\tau$-operation on $A \oplus A^{\mathrm{op}}$ and $B \oplus B^{\mathrm{op}}$ is essentially the same we also have that $\bar{\theta}$ preserves the $\tau$ operation. 

To see that $\bar{\theta}$ is proper let $(b_1,b_2) \in B \oplus B^{\mathrm{op}}$. 
Since $\theta$ is proper we can find elements $a_1, a_2 \in A$ and $x_1, x_2 \in B$ such that 
\[
	b_1 = \theta(a_1)x_1, \quad \textrm{and,} \quad b_2 = x_2 \theta(a_2).
\]
by \cite[\S 4]{Pedersen-Factorization}.  We see that 
\[
	\bar{\theta}(a_1, a_2)(x_1, x_2) = (\theta(a_1), \theta(a_2))(x_1, x_2) = (\theta(a_2)x_1, x_2 \theta(a_2)) = (b_1, b_2).
\]
Therefore, $\bar{\theta}(A \oplus A^{\mathrm{op}})(B \oplus B^{\mathrm{op}}) = B \oplus B^{\mathrm{op}}$, so $\bar{\theta}$ is proper. 
\end{proof}

\begin{prop} \label{opSumExtension}
Suppose $\theta \colon A \to B$ is a proper $*$-homomorphism, and let $\bar{\theta}$ be the $*$-$\tau$-homomorphism constructed in Lemma \ref{opSumHom}. 
Let $D$ be a $C^{*,\tau}$-algebra and let $\phi \colon A \oplus A^{\mathrm{op}} \to D$ be a $*$-$\tau$-homomorphism.
If there is $*$-homomorphism $\gamma \colon B \to D$ such that $\gamma(\theta(a)) = \phi(a,0)$ for all $a \in A$, then there is a $*$-$\tau$-homomorphism $\psi \colon B \oplus B^{\mathrm{op}} \to D$ such that $\psi \circ \bar{\theta} = \phi$.
\end{prop}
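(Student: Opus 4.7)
The plan is to build $\psi$ explicitly out of $\gamma$ and the $\tau$-operation on $D$. Define $\gamma'\colon B^{\mathrm{op}}\to D$ by $\gamma'(b)=\gamma(b)^\tau$; since $\tau$ is an anti-automorphism of $D$ of order two and $\gamma$ is a $*$-homomorphism from $B$, a direct check shows $\gamma'$ is a $*$-homomorphism out of $B^{\mathrm{op}}$. The hypothesis $\gamma(\theta(a))=\phi(a,0)$ combined with $\phi$ being a $*$-$\tau$-homomorphism (and with $(a,0)^\tau=(0,a)$) gives $\gamma'(\theta(a))=\gamma(\theta(a))^\tau=\phi(a,0)^\tau=\phi(0,a)$. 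The candidate is then $\psi(b_1,b_2)=\gamma(b_1)+\gamma'(b_2)$.

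The main obstacle is showing that this formula actually defines a $*$-homomorphism, i.e.\ that the ranges of $\gamma$ and $\gamma'$ are orthogonal in $D$; without this, the two summands of $B\oplus B^{\mathrm{op}}$ do not recombine into a single $*$-homomorphism. Orthogonality on the nose is known only on the image of $\theta$: from $\phi(a,0)\phi(0,a')=0$ and the identification of $\phi(0,a')$ with $\gamma(\theta(a'))^\tau$, we obtain
\[
\gamma(\theta(a))\,\gamma(\theta(a'))^\tau=0\quad\text{for all }a,a'\in A.
\]
To propagate this to all of $B$, use that $\theta$ is proper. Picking an approximate unit $(e_\lambda)$ for $A$, the family $p_\lambda:=\gamma(\theta(e_\lambda))$ is an approximate unit for $\gamma(B)$ (this is the standard consequence of properness via Pedersen's factorization, as invoked in Lemma~\ref{opSumHom}). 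The displayed orthogonality then gives $p_\lambda p_\mu^\tau=0$, and for any $b'\in B$,
\[
p_\lambda\,\gamma(b')^\tau=\lim_\mu p_\lambda\bigl(\gamma(b')p_\mu\bigr)^\tau=\lim_\mu p_\lambda p_\mu^\tau\,\gamma(b')^\tau=0.
\]
Consequently, for any $b,b'\in B$,
\[
\gamma(b)\,\gamma(b')^\tau=\lim_\lambda\gamma(b)\,p_\lambda\,\gamma(b')^\tau=0,
\]
which is exactly the orthogonality needed.

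With orthogonality in hand, $\psi$ is automatically a $*$-homomorphism, so it remains to verify the two compatibility statements. For the $\tau$-equivariance, compute $\psi(b_1,b_2)^\tau=\gamma(b_1)^\tau+\gamma(b_2)=\psi(b_2,b_1)=\psi((b_1,b_2)^\tau)$, using that $\tau$ on $B\oplus B^{\mathrm{op}}$ flips summands. For the factorization, $\psi(\bar\theta(a_1,a_2))=\gamma(\theta(a_1))+\gamma(\theta(a_2))^\tau=\phi(a_1,0)+\phi(0,a_2)=\phi(a_1,a_2)$. These are the routine verifications; the substantive content is the orthogonality argument powered by properness and the approximate-unit trick.
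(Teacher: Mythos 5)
Your proof is correct and takes essentially the same route as the paper: you define the same candidate $\psi(b_1,b_2)=\gamma(b_1)+\gamma(b_2)^\tau$ and reduce everything to showing $\gamma(B)\perp\gamma(B)^\tau$. The only point of divergence is cosmetic: you propagate the orthogonality from $\theta(A)$ to all of $B$ via an approximate-unit limit, whereas the paper uses Pedersen's factorization to write $b_i=x_i\theta(a_i)$ and checks orthogonality directly in one computation; both arguments are powered by properness and both are sound.
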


If we define a $*$-homomorphism $\tilde{\phi} \colon A \to D$ by $\tilde{\phi}(a) = \phi(a,0)$, then Proposition \ref{opSumExtension} states that if the diagram below on the left can be completed then so can the diagram on the right
\[
	\xymatrix{
		B \ar@{-->}[dr]^{\gamma} & \\
		A \ar[u]^{\theta} \ar[r]_{\tilde{\phi}} & D
	}
	\quad \quad \quad \quad \quad
	\xymatrix{
		B \oplus B^{\mathrm{op}} \ar@{-->}[dr]^{\psi} & \\
		A \oplus A^{\mathrm{op}} \ar[u]^{\bar{\theta}} \ar[r]_{\phi} & D 
	}
\]

\begin{proof}
We first show that the image of $\gamma$ is orthogonal to its reflection under $\tau$. 
So let $b_1,b_2 \in B$, and use the properness of $\theta$ to find $a_1, a_2 \in A$ and $x_1, x_2 \in B$ such that $b_i =  x_i\theta(a_i)$, $i=1,2$. 
Then we have that 
\begin{align*}
	\gamma(b_1)\gamma(b_2)^\tau	& =	\gamma(x_1\theta(a_1)) \gamma(x_2\theta(a_2))^\tau \\
		& = \gamma(x_1) \gamma(\theta(a_1)) \gamma(\theta(a_2))^\tau \gamma(x_2) \\
		& =	\gamma(x_1) \phi((a_1, 0)) \phi((a_2, 0))^\tau \gamma(x_2) \\
		& =	\gamma(x_1) \phi((a_1,0)(0,a_2)) \gamma(x_2) = 0, 
\end{align*}
where we used that $\phi$ is a $\tau$-preserving homomorphism. 

We define a $\tau$-preserving map $\psi \colon B \oplus B^{\tau} \to D$ by
\[
	\psi(b_1,b_2) = \gamma(b_1) + \gamma(b_2)^\tau.
\]
Clearly $\psi$ is linear and $*$-preserving.  Since $\gamma(B) \perp \gamma(B)^\tau$ we also have that $\psi$ is multiplicative, and therefore $\psi$ is a $*$-homomorphism.

Finally we check that $\psi \circ \bar{\theta} = \phi$. 
Let $(a_1, a_2) \in A \oplus A^{\mathrm{op}}$, then 
\begin{align*}
	\psi(\bar{\theta}(a_1, a_2))	& =	\psi((\theta(a_1), \theta(a_2))) = \gamma(\theta(a_1)) + \gamma(\theta(a_2))^\tau \\
		& =	\phi((a_1, 0)) + \phi((a_2, 0)^\tau) = \phi(a_1, a_2). \qedhere
\end{align*} 
\end{proof}

\begin{proof}[Proof of Theorem \ref{thm:realExtension}]
Since $\bar{\theta}$ is proper (Lemma \ref{opSumHom}) the left most square in the diagram is a pushout by \cite[Theorem 5.4]{LoringSorensenTimeReversal}, so it suffices to find a $*$-$\tau$-homomorphism $\chi \colon A_1 \oplus A_1^{\mathrm{op}} \to E$ such that 
\[
	\xymatrix{
		A_1 \oplus A_1^{\mathrm{op}} \ar@/^1em/[drr]^{\chi} & & \\
		A \oplus A^{\mathrm{op}} \ar[u]^{\bar{\theta}} \ar[r]_-{\iota} & B \ar[r]_{\phi} & E.
	}
\]
commutes. 
By Proposition \ref{opSumExtension} we can find such a $\chi$ since we have a map $\lambda \colon A_1 \to E$ such that 
\[
	\xymatrix{
		A_1 \ar@/^1em/[drr]^{\lambda} & & \\
		A \ar[u]^{\theta} \ar[r] & B \ar[r]_{\phi} & E.
	}
\]
commutes. 
\end{proof}

The usefulness of Theorem \ref{thm:realExtension} depends on the quality of our extensions results for $C^*$-algebras.
Since we aim to ``punch holes'' in a torus and a grid, we need both two and one dimensional extensions results for $C^*$-algebra. 
We prove such results next. 
The flavor of these results is certainly not new, but we need to do a little work to get them in exactly the form we want. 

\subsection{Two dimensional case}

The first criteria for using Theorem \ref{thm:realExtension} is that we have a proper map. 

\begin{lem} \label{lem:twoDimAlphaIsProper}
Let $\cU$ be the open unit disc in $\bbC$ and let $A$ be the half open annulus, that is  $A = \{ z \in \bbC \mid 1 \leq |z| < 2 \}$.  Let $\alpha \colon C_0(\cU) \to C_0(A)$ be the map that is given by collapsing the inner circle of $A$ to a single point.
Then $\alpha$ is proper. 
\end{lem}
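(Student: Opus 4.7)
The plan is to exhibit $\alpha$ as the pullback along an explicit continuous proper map $h \colon A \to \cU$ and then invoke the standard correspondence between topologically proper maps of locally compact Hausdorff spaces and proper (i.e.\ nondegenerate) $*$-homomorphisms of the corresponding commutative $C^*$-algebras.

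First, I would define $h$ concretely. In polar coordinates set $h(re^{i\theta}) = (r-1)e^{i\theta}$ for $r \in [1,2)$. This is continuous and surjective onto $\cU$: it collapses the inner circle $\{|z|=1\} \subset A$ to the origin $0 \in \cU$, it is injective off that circle, and $|h(re^{i\theta})| \to 1$ precisely as $r \to 2$, so $h$ genuinely lands in the open disc. The map of the lemma is then $\alpha(f) = f \circ h$.

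Second, I would verify that $h$ is topologically proper. Given any compact $K \subset \cU$, choose $\rho < 1$ with $K \subset \{|w| \leq \rho\}$; then $h^{-1}(K) \subset \{re^{i\theta} : 1 \leq r \leq 1+\rho\}$, which is closed and bounded in $\bbC$, hence compact in $A$. This step uses critically that $A$ contains its inner boundary (so $h^{-1}(\{0\})$ is compact) while omitting its outer boundary (so $h$ actually lands in $\cU$ rather than in its closure).

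Finally, I would conclude that $\alpha$ is proper in the sense used in Lemma~\ref{opSumHom} by exhibiting an approximate unit of $C_0(\cU)$ whose image is an approximate unit of $C_0(A)$. Fix a continuous function $g \colon \bbR \to [0,1]$ with $g \equiv 0$ on $(-\infty, 0]$ and $g \equiv 1$ on $[1,\infty)$, and set $e_n(w) := g(n(1-|w|))$. Then $(e_n)$ is an approximate unit of $C_0(\cU)$, and $\alpha(e_n)(re^{i\theta}) = g(n(2-r))$ equals $1$ on $\{r \leq 2 - 1/n\}$, so by properness of $h$ the net $\alpha(e_n)$ converges uniformly to $1$ on compact subsets of $A$; hence $\alpha(e_n) f \to f$ in norm for every $f \in C_0(A)$. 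The proof is routine bookkeeping once the correct $h$ is identified; I do not anticipate any real obstacle.
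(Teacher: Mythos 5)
Your proof is correct, but it takes a different and considerably longer route than the paper's. The paper's entire argument is two sentences: a strictly positive element of a commutative $C^*$-algebra is just a function taking only strictly positive values, and since $\alpha$ is composition with the (surjective) collapse map, it sends such a function on $\cU$ to such a function on $A$; hence $\alpha$ maps a strictly positive element to a strictly positive element, which is one of the standard characterizations of a proper (nondegenerate) $*$-homomorphism. What you have done instead is make the underlying map of spaces $h$ explicit, verify that $h$ is topologically proper, and then write down an approximate unit of $C_0(\cU)$ and check by direct computation that its image is an approximate unit of $C_0(A)$. That works, and it has the virtue of being elementary and self-contained (it doesn't presuppose the strictly-positive-element characterization). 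Two small remarks: first, the step where you verify topological properness of $h$ is not actually used in your final argument --- your explicit formula $\alpha(e_n)(re^{i\theta}) = g(n(2-r))$ already shows $\alpha(e_n) \equiv 1$ on $\{r \le 2 - 1/n\}$, an exhaustion of $A$ by compacts, which is all you need; you could drop the middle paragraph entirely. Second, you may as well cite the strictly-positive-element criterion and follow the paper's shortcut, since it is far less bookkeeping; your version is better thought of as a proof of the general fact that a topologically proper map of locally compact Hausdorff spaces induces a proper $*$-homomorphism, specialized to this $h$.
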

\begin{proof}
A strictly positive element in a commutative $C^*$-algebra is simply a function that only takes strictly positive values. 
Since applying $\alpha$ to a function does not change what values the function takes $\alpha$ is proper. 
\end{proof}

The next requirement of Theorem \ref{thm:realExtension} is that we have an extendable $*$-homomor-phism. 
The result we want closely related to results in \cite{LoringWhenMatrices}, to get it in exactly the form we need, we massage \cite[Theorem 9]{LoringWhenMatrices}.

\begin{prop} \label{prop:twoDimExtension}
Let $\alpha \colon C_0(\cU) \to C_0(A)$ be as in Lemma \ref{lem:twoDimAlphaIsProper}. 
Given a sequence of natural numbers $(d_n)$ and a $*$-homomorphism $\phi \colon C_0(\cU) \to \prod M_{d_n} / \bigoplus M_{d_n}$ there exists a $*$-homomorphism $\psi \colon C_0(A) \to \prod M_{d_n} / \bigoplus M_{d_n}$ such that $\psi \circ \alpha = \phi$ if $K_*(\phi) = 0$.
\end{prop}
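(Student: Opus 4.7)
The plan is to translate the extension problem into a commuting unitary polar decomposition question in $Q := \prod M_{d_n}/\bigoplus M_{d_n}$, and then to invoke \cite[Theorem 9]{LoringWhenMatrices} after checking conventions. A $*$-homomorphism $\phi : C_0(\cU) \to Q$ is equivalent, via functional calculus, to giving the normal element $x := \phi(\iota) \in Q$ with $\|x\| \leq 1$, where $\iota(z)=z$ is the coordinate function. Similarly, a $*$-homomorphism $\psi : C_0(A) \to Q$ corresponds to a normal element $y \in Q$ whose spectrum lies in the half-open annulus $A = \{1 \le |z| < 2\}$, and the compatibility $\psi \circ \alpha = \phi$ translates into the identity $q(y) = x$, where $q(w) = (|w|-1)\,w/|w|$ is the collapsing quotient.

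Writing $y = u(|x|+1)$ for a putative polar decomposition, the equation $q(y) = x$ becomes $u|x| = x$, and the normality of $y$ forces $u$ to commute with $|x|$, hence with $x$. Thus the extension question reduces to: does $x$ admit a commuting unitary polar decomposition in $Q$? Such a decomposition is the central topic of \cite[Theorem 9]{LoringWhenMatrices}, which exhibits the obstruction to its existence as a $K$-theoretic index attached to $x$.

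The final step, and the main work, is to identify Loring's obstruction with $K_*(\phi)$. One has $K_0(C_0(\cU)) = \bbZ$ (generated by the Bott class) and $K_1(C_0(\cU)) = 0$, so the hypothesis $K_*(\phi) = 0$ is equivalent to the vanishing of $K_0(\phi)$ on the Bott generator; by naturality of $K$-theory this should match Loring's invariant evaluated at $x = \phi(\iota)$. The main obstacle is precisely this matching: \cite[Theorem 9]{LoringWhenMatrices} is stated in terms of normal elements with prescribed spectrum rather than homomorphisms out of $C_0(\cU)$, and to \emph{massage} it into the form needed one must carefully track conventions (direction of the Bott map, signs of indices, properness versus degeneracy) and allow for the image of $\psi$ to live in a corner of $Q$ rather than the full algebra. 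These are bookkeeping details, but they are the heart of what the excerpt calls the massaging step; once completed, the conclusion of Proposition~\ref{prop:twoDimExtension} is immediate from \cite[Theorem 9]{LoringWhenMatrices}.
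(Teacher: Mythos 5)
Your translation step has a genuine gap that undermines the rest of the argument. You assert that a $*$-homomorphism $\phi\colon C_0(\cU)\to Q$ is equivalent, via functional calculus, to the normal contraction $x:=\phi(\iota)$, where $\iota(z)=z$ is the coordinate function. But $\iota\notin C_0(\cU)$: the coordinate function on the open unit disc does not vanish at infinity, so $\phi(\iota)$ is undefined. Passing to unitizations does not fix this, because the one-point compactification of $\cU$ is $S^2$, and $C(S^2)$ is not singly generated by a normal element (no continuous complex-valued function on $S^2$ separates points, since $S^2$ does not embed in $\bbC$). A unital $*$-homomorphism $C(S^2)\to Q$ is therefore \emph{not} determined by one normal contraction; it requires the additional ``height'' data (think of a normal $W$ and a commuting self-adjoint $H$ with $W^*W+H^2=1$, where the sign of $H$ is genuine extra information). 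As a result the reduction to a ``commuting unitary polar decomposition'' is not established, and the subsequent identification with the obstruction in \cite[Theorem 9]{LoringWhenMatrices} is not justified.

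The paper's proof takes a different and more careful route: it unitizes to obtain $\tilde{\alpha}\colon C(S^2)\to C(\bbD)$, introduces the intermediate cylinder $X_1=\{re^{2\pi i\theta}:1\le r\le 2\}$ with the factorization $\gamma=\beta\circ\tilde{\alpha}$, and applies \cite[Theorem 9]{LoringWhenMatrices} to extend $\tilde{\phi}$ over $C(X_1)$. The obstruction there is expressed as the Murray--von Neumann equivalence of two specified projections; this is then reduced to $K$-theory via the cancellation property of $Q=\prod M_{d_n}/\bigoplus M_{d_n}$ (which has stable rank one), and finally compared with the Bott element. Your proposal skips the intermediate cylinder, drops the cancellation argument entirely, and mischaracterizes the statement of the cited theorem; even if the polar-decomposition heuristic captures part of the geometric content, the argument as written does not prove the proposition.
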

\begin{proof}
To simplify notation we let $Q = \prod M_{d_n} / \bigoplus M_{d_n}$.
Since $C_0(\cU)$ and $C_0(A)$ are non-unital and $Q$ is unital, proving the proposition is equivalent to proving the same extension property for $\tilde{\alpha} \colon C(S^2) \to C(\bbD)$, where $\bbD$ is the closed unit disc. 
The map $\tilde{\alpha}$ is given by collapsing the boundary of the disc to a single point. 
Pictorially our extension problem is
\[
\xymatrix{
	C(\bbD) \ar@{-->}[dr]^-{\psi} & \\
	C(S^2) \ar[u]^-{\tilde{\alpha}} \ar[r]_-{\tilde{\phi}} & Q
}
\]
The $K$-theory condition becomes that $K_0(\tilde{\phi})$ kills the so-called Bott element in $K_0(C(S^2))$ (see \cite[Section 9.2.10]{BlackadarKTheoryBook} for a description of the generator of $K_0(C_0(\bbR^2))$, its image in $K_0(C(S^2))$ is the Bott element).

Denote by $X_1$ the cylinder $\{ r e^{2 \pi i \theta} \mid 1 \leq r \leq 2 \}$.
We have a map $\gamma \colon C(S^2)\to C(X_1)$ that is given  by collapsing the top and bottom of the cylinder to two points, and a map $\beta \colon C(\bbD) \to C(X_1)$ that is given by collapsing the top of the cylinder to one point. 
Notice that $\gamma = \beta \circ \tilde{\alpha}$, that is the following diagram commutes.
\[
	\xymatrix{
		C(S^2) \ar[r]_{\tilde{\alpha}} \ar@/^1em/[rr]^{\gamma} & C(\bbD) \ar[r]_{\beta} & C(X_1) 
	}
\]
We see that it suffice to extend $\phi$ to $C(X_1)$ and then pre-compose that extension with $\beta$. 
By \cite[Theorem 9]{LoringWhenMatrices} such an extension will exists, if two specified projections, say $p,q$, are Murray-von Neumann equivalent. 
Since $Q$ has stable rank one it has the cancellation property (\cite[Proposition 6.5.1]{BlackadarKTheoryBook}, so $p$ and $q$ are Murray-von Neumann equivalent if they represent the same class in $K$-theory. 
By inspection of the definition of the Bott element and of $p$ and $q$, we see that this happens if $K_0(\tilde{\phi})$ kills the Bott element. 
\end{proof}

\subsection{One dimensional case}

In the one dimensional case we also need to know that a certain map is proper. 
The proof of that is very similarly to the proof of Lemma \ref{lem:twoDimAlphaIsProper}, so we omit it. 

\begin{lem} \label{lem:alphaIsProper}
Let $\alpha \colon C_0((0,1)) \to C_0((0,1] \cup [2,3))$ be the map that is given by identifying the endpoints of the half-open intervals. 
Then $\alpha$ is proper. 
\end{lem}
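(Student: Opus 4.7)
The plan is to follow the template of Lemma~\ref{lem:twoDimAlphaIsProper} essentially verbatim, since both maps are dual to continuous surjections between locally compact spaces and the claim reduces to a statement about strictly positive elements. First I would make the topological dual map explicit: $\alpha$ is induced by a continuous surjection $p \colon (0,1] \cup [2,3) \to (0,1)$ that collapses the two closed endpoints $1$ and $2$ to a single interior point of $(0,1)$ while stretching each half-open interval across one side. The open ends $0^+$ and $3^-$ are carried to the open endpoints of $(0,1)$, which is exactly the compatibility needed for $\alpha(f) = f \circ p$ to land in $C_0((0,1] \cup [2,3))$ whenever $f \in C_0((0,1))$.

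Next I would recall the characterization of strictly positive elements in commutative $C^*$-algebras: $f \in C_0(X)$ is strictly positive precisely when $f(x)>0$ for every $x \in X$. Since $p$ is surjective, the range of $f \circ p$ equals the range of $f$, so pulling back along $p$ preserves strict positivity. In particular, choosing any strictly positive $f \in C_0((0,1))$ yields a strictly positive $\alpha(f) \in C_0((0,1] \cup [2,3))$.

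Finally, I would conclude that $\alpha$ is proper: a $*$-homomorphism whose image contains a strictly positive element of its codomain is automatically proper, since such an element generates a dense ideal and Cohen--Pedersen factorization (as invoked in the proof of Lemma~\ref{opSumHom}) supplies the factorization required by the definition of properness. There is no real obstacle: the only step deserving any thought is writing $p$ down concretely and checking that it is surjective and sends the open ends to the open ends. Everything else is a word-for-word replay of Lemma~\ref{lem:twoDimAlphaIsProper}, which is why the authors are content to omit the argument.
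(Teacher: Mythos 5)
Your proof is correct and follows the same strict-positivity argument as Lemma~\ref{lem:twoDimAlphaIsProper}, which is precisely the analogy the paper invokes when omitting this proof. Making the dual surjection $p$ explicit and citing Cohen--Pedersen factorization to justify the ``strictly positive element implies proper'' step adds detail but is the same underlying argument the paper has in mind.
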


This time our desired extension result is a special case of \cite[Lemma 3.2]{ELP-Fragility}, so we wont give a proof. 
It is however worth noting, that a very short proof might be to simply observe that every unitary in $\prod M_{d_n} / \bigoplus M_{d_n}$ has a logarithm.

\begin{prop} \label{prop:oneDimExtension}
Let $\alpha \colon C_0((0,1)) \to C_0((0,1] \cup [2,3))$ be as in Lemma \ref{lem:alphaIsProper}. 
Suppose we are given a sequence of natural numbers $(d_n)$ and a $*$-homomorphism $\phi \colon C_0((0,1)) \to \prod M_{d_n} / \bigoplus M_{d_n}$ then there exists a $*$-homomorphism $\psi \colon C_0((0,1] \cup [2,3)) \to \prod M_{d_n} / \bigoplus M_{d_n}$ such that $\psi \circ \alpha = \phi$.
\end{prop}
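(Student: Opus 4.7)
The plan is to follow the short route indicated by the authors' parenthetical remark: reduce the extension problem to showing that every unitary in $Q := \prod M_{d_n}/\bigoplus M_{d_n}$ admits a self-adjoint logarithm of norm at most $\pi$, and then produce that logarithm by lifting to genuine unitaries in each $M_{d_n}$ and applying the finite-dimensional spectral theorem component-wise.

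First I would translate both $*$-homomorphisms into spectral data. A $*$-homomorphism $\phi \colon C_0((0,1)) \to Q$ extends uniquely to a unital $*$-homomorphism $C(S^1) \to Q$, and hence corresponds to a single unitary $U \in Q$. On the codomain side, the one-point compactification of $(0,1] \cup [2,3)$ glues the two ``open'' endpoints to a single point and produces an arc, so $C_0((0,1] \cup [2,3))^+$ is isomorphic to $C([-1,1])$ (with the glued point corresponding to $0 \in [-1,1]$). A $*$-homomorphism $\psi \colon C_0((0,1] \cup [2,3)) \to Q$ therefore corresponds to a self-adjoint contraction $T \in Q$ with $\sigma(T) \subseteq [-1,1]$. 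Unwinding the definition of $\alpha$ from Lemma \ref{lem:alphaIsProper} under these identifications, with a convenient choice of how each half-open interval is parameterized by the arc, the relation $\psi \circ \alpha = \phi$ reduces to the single equation $e^{i\pi T} = U$.

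The main step is producing such a $T$. I would lift $U$ to a sequence $(U_n) \in \prod M_{d_n}$, and, by passing to the unitary part of the polar decomposition for all but finitely many $n$, arrange that each $U_n$ is a genuine unitary in $M_{d_n}$ without changing the class of the sequence in $Q$. For each $n$, diagonalize $U_n = \sum_j e^{i\theta_{n,j}} p_{n,j}$ with $\theta_{n,j} \in (-\pi,\pi]$ and set $K_n := \sum_j \theta_{n,j} p_{n,j}$, a self-adjoint matrix of norm at most $\pi$ satisfying $e^{iK_n} = U_n$ exactly. Then $(K_n/\pi)$ defines a self-adjoint contraction in $\prod M_{d_n}$ whose image $T \in Q$ satisfies $e^{i\pi T} = U$.

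Finally, I would define $\psi$ via continuous functional calculus from $T$ and verify $\psi \circ \alpha = \phi$ on a chosen generator; the verification extends to all of $C_0((0,1))$ by $C^*$-continuity. The only real obstacle is the bookkeeping of unwinding $\alpha$ under the two compactifications $S^1$ and $[-1,1]$ so that the compatibility relation comes out cleanly as $e^{i\pi T} = U$; the logarithm step itself is immediate from the finite-dimensional spectral theorem applied in each coordinate, and no $K$-theoretic input is required.
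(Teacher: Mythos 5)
Your proof is correct and realizes precisely the ``very short proof'' the paper flags immediately after the statement: every unitary in $\prod M_{d_n}/\bigoplus M_{d_n}$ has a bounded self-adjoint logarithm, obtained coordinate-wise via the finite-dimensional spectral theorem after replacing a lift by its unitary polar parts, and your identification of the two one-point compactifications ($C_0((0,1))^+\cong C(S^1)$, $C_0((0,1]\cup[2,3))^+\cong C([-1,1])$) correctly turns $\psi\circ\alpha=\phi$ into $e^{i\pi T}=U$. The paper itself gives no written proof and instead invokes the more general one-dimensional semiprojectivity result \cite[Lemma 3.2]{ELP-Fragility}; you have supplied the elementary argument the authors mention but decline to record.
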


%
%

\section{Opening up holes}  \label{sec:make_holes}

We will now use Theorem \ref{thm:realExtension} to open up holes in the torus (and incidentally other two dimensional CW complexes) and a grid. 
The main idea is that to remain in the category of $C^{*,\tau}$-algebras we do not open up holes one at a time, rather we open them up in pairs.

\subsection{Two dimensional case}

We wish to use Proposition \ref{prop:twoDimExtension} to guarantee the existence of $C^*$-extensions of our maps. 
To this end we need to know that the $K$-theory of certain maps vanish. 
This is the content of Proposition \ref{prop:noKTheory}.
We will use the connection discovered in \cite{ExelLoringInvariants} between the winding number invariant, first seen in \cite{ExelLoringAlmostCommuting}, and the $K$-theory of maps out of $C(\torus)$. 

\begin{defi}[{\cite[Defintion 1.3]{ExelLoringInvariants}}]
Let $U,V \in M_n$ be two unitary matrices.
If $\|UV - VU\| < 2$ we define their \emph{Bott index} (also called \emph{winding number}), as the winding number of the path
\[
	[0,1] \ni t \mapsto \det((1-t)UV + tVU). 
\]
\end{defi}

\begin{lem} \label{lem:trivialBottIndex}
Let $U,V \in M_n$ be uniatry matrices that are either real or symplectic.
When $\| UV - VU \|$ is sufficiently small the Bott index of $(U,V)$ is zero. 
\end{lem}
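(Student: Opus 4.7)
My plan is to show that under either symmetry hypothesis the determinant path
\[
t \mapsto \det\bigl((1-t)UV + tVU\bigr)
\]
takes values in $\mathbb{R}\setminus\{0\}$; since the endpoints agree (both equal $\det(UV) = \det(VU)$), the loop will be null-homotopic already inside $\mathbb{R}\setminus\{0\}$, forcing the winding number in $\mathbb{C}\setminus\{0\}$ to vanish. I would treat both cases uniformly by writing $\tau$ for either $T$ or $\sharp$. A real orthogonal matrix is precisely a unitary $U$ with $U^T = U^*$, and a symplectic unitary is precisely a unitary $U$ with $U^\sharp = U^*$: indeed, rearranging $U^T J U = J$ together with $U^{-1} = U^*$ gives $U^\sharp = JU^T J^{-1} = U^{-1} = U^*$. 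So in both cases I may assume $U^\tau = U^*$ and $V^\tau = V^*$.

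Next I would compute $M(t)^\tau$ where $M(t) := (1-t)UV + tVU$. Since $\tau$ is an antihomomorphism,
\[
M(t)^\tau = (1-t)V^\tau U^\tau + tU^\tau V^\tau = (1-t)V^*U^* + tU^*V^* = M(t)^*.
\]
Applying $\det$ and using $\det X^\tau = \det X$ (immediate for the transpose; for the dual it follows from $X^\sharp = JX^TJ^{-1}$), I get $\det M(t) = \det M(t)^* = \overline{\det M(t)}$, so $\det M(t)\in \mathbb{R}$ for every $t\in[0,1]$.

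For invertibility of $M(t)$ I would use the two decompositions
\[
M(t) = UV\bigl(I - t(UV)^{*}(UV - VU)\bigr) = VU\bigl(I + (1-t)(VU)^{*}(UV - VU)\bigr).
\]
The first factor is invertible whenever $t\|UV - VU\| < 1$ and the second whenever $(1-t)\|UV - VU\| < 1$; if $\|UV - VU\| < 2$ then at least one condition holds for every $t\in[0,1]$, so $M(t)$ is invertible throughout. Combined with the previous paragraph, $t\mapsto \det M(t)$ is a continuous map $[0,1]\to \mathbb{R}\setminus\{0\}$ with matching endpoints, hence null-homotopic in $\mathbb{R}\setminus\{0\}\subset \mathbb{C}\setminus\{0\}$, so the Bott index is zero.

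The main thing to get right is the translation of the symplectic condition into the clean algebraic identity $U^\sharp = U^*$; once that is in hand, the proof reduces to the one-line determinant computation above together with the elementary connectedness observation about $\mathbb{R}\setminus\{0\}$. No $K$-theory or deeper structure is needed at this stage.
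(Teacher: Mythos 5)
Your proof is correct, and for the symplectic case it takes a genuinely different, more elementary route than the paper. You observe that the symmetry $U^\tau = U^*$, $V^\tau = V^*$ together with the fact that $\det$ is invariant under $\tau$ (both for transpose and for $\sharp = J(\cdot)^T J^{-1}$, since the $J$-conjugation drops out of the determinant) forces $\det M(t) = \overline{\det M(t)}$ for every $t$, so the entire winding-number path lies in $\mathbb{R}\setminus\{0\}$ and is automatically null-homotopic. The paper uses this real-determinant argument only in the orthogonal case; for the symplectic case it instead switches to the trace-of-logarithm formula $\frac{1}{2\pi i}\operatorname{Tr}\bigl(\operatorname{Log}(VUV^*U^*)\bigr)$ from Exel's soft-torus paper and appeals to the quaternionic spectral theorem (eigenvalues of a self-dual normal matrix occur in conjugate pairs $e^{\pm i\theta}$, so the log has trace zero). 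Your version unifies both cases under the single identity $\det X^\tau = \det X$, avoids the alternate index formula and the quaternionic spectral theorem entirely, and additionally makes explicit via the factorizations $M(t) = UV\bigl(I - t(UV)^*(UV-VU)\bigr) = VU\bigl(I + (1-t)(VU)^*(UV-VU)\bigr)$ that $M(t)$ is invertible throughout $[0,1]$ whenever $\|UV-VU\|<2$, so the path is well-defined under exactly the hypothesis of the Bott-index definition. The one small thing worth smoothing when you write it up: from $U^TJU = J$ you get $J^{-1}U^TJ = U^{-1}$, and this equals $JU^TJ^{-1}$ because $J^{-1} = -J$; stating that intermediate step explicitly would make the identification $U^\sharp = U^*$ airtight.
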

\begin{proof}
In the real case we have that $\det((1-t)UV + tVU)$ is real for all $t \in [0,1]$. 
This clearly implies that the winding number of the path is zero.


In the symplectic unitary case we use an alternate formula for the Bott index,
\[
\frac{1}{2\pi i} \mathrm{Tr} \left( \mathrm{Log} \left( VUV^*U^*\right)\right)
\]
from \cite{ExelSoftTorus}, which is valid for small commutators.  
The spectral theorem \cite[Theorem 2.4]{LoringQuaternions} for a normal matrix $X$ for which $X^\sharp = X^*$ states that all its eigenvalues will appear in conjugate pairs $e^{i\theta}, e^{-i\theta}$. 
So the eigenvalues of the principal logarithm of $VUV^*U^*$ appear in pairs $i\theta, -i\theta$, and hence the  principal logarithm will have trace zero.
\end{proof}

\begin{prop} \label{prop:noKTheory}
Let $(d_n)$ be a sequence of natural numbers, let $\phi \colon C(\torus) \to \prod M_{d_n} / \bigoplus M_{d_n}$ be a $*$-homomorphism, and let $\iota \colon C_0(\cU) \to C(\torus)$ be an inclusion.  
Denote by $\tau$ the reflection on $\prod M_{d_n} / \bigoplus M_{d_n}$ induced either by the transpose or the dual map on all the $M_{d_n}$.
Let $u,v$ be the universal generators of $C(\torus)$.
If $\phi(u)^\tau = \phi(u)^*$ and $\phi(v)^\tau  = \phi(v)^*$, then $K_0(\phi \circ \iota) = 0$.
\end{prop}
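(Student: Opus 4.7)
The plan is to identify $K_0(\phi \circ \iota)$ with a sequence of Bott indices of almost-commuting unitary pairs, and then use the symmetry hypothesis together with Lemma~\ref{lem:trivialBottIndex} to force those indices to vanish.

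First I recall that $K_0(C_0(\cU)) \cong \bbZ$ is generated by the Bott element $\beta$, which under $\iota$ pushes forward to the standard Bott element $\iota_*\beta \in K_0(C(\torus))$. The key computational input, due to Exel and Loring in \cite{ExelLoringInvariants}, is that for any $*$-homomorphism $\Psi \colon C(\torus) \to M_d$ the integer $K_0(\Psi)(\iota_*\beta)$ equals the Bott index of the commuting pair $(\Psi(u), \Psi(v))$. I would apply this at the level of the quotient $Q = \prod M_{d_n} / \bigoplus M_{d_n}$: lifting $\phi(u)$ and $\phi(v)$ to sequences $(U_n), (V_n) \in \prod M_{d_n}$ of unitaries with $\|U_nV_n - V_nU_n\| \to 0$, the class $K_0(\phi \circ \iota)(\beta)$ is detected, via the fiberwise matricial description of $K_0(Q)$, by the eventual value of the sequence of Bott indices of the pairs $(U_n, V_n)$. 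In particular, that class is zero as soon as those Bott indices vanish for all sufficiently large $n$.

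Next I use the symmetry hypothesis to arrange that each $U_n$ and $V_n$ lies exactly in the symmetry class dictated by $\tau_0$. The assumptions $\phi(u)^\tau = \phi(u)^*$ and $\phi(v)^\tau = \phi(v)^*$ unfold to $\|U_n^{\tau_0} - U_n^*\| \to 0$ and the analogous statement for $V_n$. A standard averaging-then-polar-decomposition argument (apply the involution $X \mapsto (X^{\tau_0})^*$, which is an honest order-two $\bbR$-linear $*$-automorphism of $M_{d_n}$, symmetrize, then take the polar unitary) then replaces each $U_n$ by a nearby unitary satisfying $U_n^{\tau_0} = U_n^*$ exactly; in the transpose case this forces $U_n$ to be a real orthogonal matrix, and in the dual case a symplectic unitary. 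The perturbation is asymptotically negligible, so the new sequence is still a lift of $\phi(u)$, and the same construction applied to $V_n$ preserves the vanishing of the commutator norm.

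Now Lemma~\ref{lem:trivialBottIndex} applies: for all sufficiently large $n$ the pair $(U_n, V_n)$ is a pair of almost commuting real orthogonal (resp. symplectic) matrices, so its Bott index is zero. Combining with the identification of the first step, $K_0(\phi \circ \iota)(\beta) = 0$, and since $\beta$ generates $K_0(C_0(\cU))$ we conclude $K_0(\phi \circ \iota) = 0$. The main potential obstacle is the bookkeeping in the first step, namely checking that the $K$-theoretic invariant $K_0(\phi \circ \iota)(\beta)$ in $K_0(Q)$ really is computed by the fiberwise Bott indices of the lifts; one has to go back to a concrete representative of $\beta$ as a projection in $M_k(C_0(\cU)^\sim)$, lift it entry-by-entry to $\prod M_{d_n}$, and identify the resulting class with the Exel--Loring winding-number formula. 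Once that identification is in hand, the symmetrization of lifts and the appeal to Lemma~\ref{lem:trivialBottIndex} are routine.
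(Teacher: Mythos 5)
Your proposal is correct and follows essentially the paper's route: lift $\phi(u),\phi(v)$ to unitaries with $U_n^{\tau_0}=U_n^*$ and $V_n^{\tau_0}=V_n^*$, invoke Lemma~\ref{lem:trivialBottIndex} to kill the Bott indices, and translate the conclusion to $K$-theory via the Exel--Loring machinery. The paper settles the final bookkeeping you flag by working with the explicit Exel--Loring projection $e(U,V)$ and \cite[Theorem 4.1]{ExelLoringInvariants}, exhibiting $\phi^{(2)}(\beta)=\pi^{(2)}(\chi(e(U_n,V_n)))$ as Murray--von Neumann equivalent to $\phi(1)\oplus 0$ directly in $M_2(Q)$ (avoiding any abstract computation of $K_0(Q)$), and it obtains the symmetric lifts by citing \cite[Example 2.21]{LoringSorensenLinsTheorem} rather than your averaging/polar-decomposition symmetrization, which is an equivalent device.
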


\begin{proof}
It is well known that $K_0(C(\torus))$ is $\bbZ \oplus \bbZ$, where the class of the constant function $1$ is $(1,0)$ and the class of the so-called Bott projection is $(1,1)$. 
For a projection $P \in M_n(C(\torus))$, the value of $[P]_{K_0}$ in the first summand is the trace of $P$.
The trace on $C(\torus)$ is the usual matrix trace composed with integration. 

In \cite[9.2.10]{BlackadarKTheoryBook} a generator of $K_0(C_0(\cU)) \cong \bbZ$ is given. 
One sees that it has trace zero, so that when we use $\iota$ to move it into $C(\torus)$ it will still have trace zero. 
We can then complete our proof, by showing that $\phi_* \colon K_0(C(\torus)) \to K_0(\prod M_{d_n} / \bigoplus M_{d_n})$ kills the second summand of $K_0(C(\torus))$. 
This amounts to showing $\phi(1)$ is $K$-theory equivalent to $\phi^{(2)}(\beta)$, where $\beta$ denotes the Bott projection in $M_2(C(\torus))$. 

\begin{figure}
\begin{center}
\includegraphics[clip,scale=1.5]{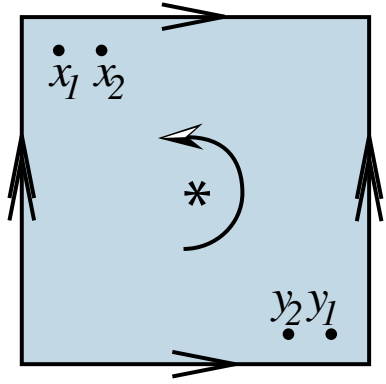}
\caption{
Choosing points on the torus in pairs so the involution sends $x_j$ to
$y_j$.
\label{fig:torusWithPoints}
}
\end{center}
\end{figure}

Following \cite[section 1]{ExelLoringInvariants} we define for any two unitaries $U,V$ in a $C^*$-algebra $A$ an element $e(U,V) \in M_2(A)$, such that $e(U,V)$ is a projection when $U$ and $V$ commute, and is close to a projection when $U$ and $V$ almost commute. 
Furthermore, we have $e(\phi(u),\phi(v)) = \phi^{(2)}(\beta)$.
By \cite[Example 2.21]{LoringSorensenLinsTheorem} we can find unitary matrices $U_n, V_n \in M_{d_n}$ such that $(U_n), (V_n) \in \prod M_{d_n}$ are lifts of $\phi(u)$ and $\phi(v)$, and such that $U_n^\tau = U_n^*$ and $V_n^\tau = V_n^*$. 
Since $\phi(u)$ and $\phi(v)$ commute, we can assume that $U_n$ and $V_n$ commute as well as we would like. 
Hence, Lemma \ref{lem:trivialBottIndex} tells us that the Bott index of any pair $(U_n, V_n)$ is zero. 
By \cite[Theorem 4.1]{ExelLoringInvariants} we then have that the so called $K$-theory invariant of $(U_n, V_n)$ is zero for all $n$. 
This implies that for a suitable indicator function $\chi$, we have that $\chi(e(U_n,V_n))$ is Murray-von Neumann equivalent to the unit of $M_2(M_{d_n})$ for all $n$. 
Denote the quotient map by $\pi \colon \prod M_{d_n} \to \prod M_{d_n} / \bigoplus M_{d_n}$.
Then we have 
\begin{align*}
	\phi^{(2)}(\beta) &= e(\phi(u),\phi(v)) = \pi^{(2)}((\chi(e(U_n,V_n)))) \\
					  &\sim_{MvN} \pi^{(2)}\left(\begin{pmatrix} 1 & 0 \\ 0& 0 \end{pmatrix} \right) = \phi^{(2)}\left(\begin{pmatrix} 1 & 0 \\ 0& 0 \end{pmatrix} \right),
\end{align*}
where $\sim_{MvN}$ denote Murray-von Neumann equivalence. 
So $\phi^{(2)}(\beta)$ represents the same $K$-theory class as $\phi(1)$.
\end{proof}

Since there is no $K$-theory obstruction we can always ``open up holes'' in our torus. 
Denote by $\torusn$ the $n$-times perforated torus, as shown
in Figure~\ref{fig:nHoleTorus}.

\begin{thm} \label{thm:twoDimHoles}
Let $\tau$ be the rotation reflection on the torus described in Definition \ref{def:rotationReflection}. 
Suppose that $\{x_1,x_2, \ldots, x_n, y_1, y_2, \ldots, y_n \}$ are $2n$ distinct points in the interior of the two cell of the torus chosen so that $\tau(x_i) = y_i$, for $1 \leq i \leq n$.
Let $\torusn[2n]$ be the $2n$-times perforated torus, where we have opened up holes at our $2n$ points. 
By the choice of points, $\tau$ will also define a reflection on $\torusn[2n]$, and the map $\alpha^{[2n]} \colon C(\torus, \tau) \to C(\torusn[2n], \tau)$, given by collapsing the holes to points, will be a $*$-$\tau$-homomorphism.
Given a sequence of natural numbers $(d_n)$ and a $*$-$\tau$-homomorphism $\phi \colon C(\torus, \tau) \to \prod (M_{d_n}, \tau_0) / \bigoplus (M_{d_n}, \tau_0)$, where $\tau_0$ is the transpose or dual map, there exists a $*$-$\tau$-homomorphism $\psi \colon C(\torusn[2n],\tau) \to \prod (M_{d_n}, \tau_0)$ such that $\psi \circ \alpha^{[2n]} = \phi$. 
\end{thm}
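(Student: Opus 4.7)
The plan is to apply Theorem~\ref{thm:realExtension} with the ideal of $C(\torus,\tau)$ supported near the $2n$ points, handling all $n$ $\tau$-orbits of holes simultaneously via direct sums. Choose small $\tau$-invariant open disc neighborhoods $\cU_{x_i} \ni x_i$ with $\cU_{y_i} = \tau(\cU_{x_i})$. The $\tau$-invariant ideal in $C(\torus,\tau)$ of functions vanishing outside $\bigcup_i(\cU_{x_i} \cup \cU_{y_i})$ has the form $A \oplus A^{\mathrm{op}}$ required by Theorem~\ref{thm:realExtension}, with $A = \bigoplus_{i=1}^n C_0(\cU_{x_i})$ and $\tau$ swapping $x_i$- with $y_i$-summands. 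Similarly, the ideal of $C(\torusn[2n],\tau)$ supported in the half-open annular collars of the opened boundary circles is $A_1 \oplus A_1^{\mathrm{op}}$ with $A_1 = \bigoplus_i C_0(\mathrm{annulus}_{x_i})$. Both sequences share a common quotient $D$ of functions on the complement, and the ideal map is $\bar\theta$ for $\theta = \bigoplus \alpha$ (with $\alpha$ as in Lemma~\ref{lem:twoDimAlphaIsProper}); by Lemma~\ref{opSumHom}, $\bar\theta$ is proper, so the diagram satisfies the hypotheses of Theorem~\ref{thm:realExtension}.

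To verify the remaining hypothesis, I construct a $*$-homomorphism $\lambda \colon A_1 \to \prod(M_{d_n},\tau_0)$ realizing $\phi \circ \iota(\cdot,0)$ after pre-composition with $\theta$. First, by Proposition~\ref{prop:noKTheory} applied to each $C_0(\cU_{x_i}) \hookrightarrow C(\torus)$, the $\tau$-covariance assumptions $\phi(u)^\tau = \phi(u)^*$ and $\phi(v)^\tau = \phi(v)^*$ (combined through Lemma~\ref{lem:trivialBottIndex}) force $K_0(\phi \circ \iota|_{C_0(\cU_{x_i})}) = 0$. Proposition~\ref{prop:twoDimExtension} then yields a complex extension $\bar\lambda \colon A_1 \to \prod/\bigoplus$. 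Second, each summand $C_0(\mathrm{annulus}_{x_i}) \cong \mathrm{Cone}(C(S^1))$ is a cone over a $C^*$-algebra and hence a projective $C^*$-algebra, so $\bar\lambda$ lifts through the quotient map $\pi$ to a genuine $\lambda \colon A_1 \to \prod(M_{d_n},\tau_0)$.

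The main obstacle is that Theorem~\ref{thm:realExtension} demands the compatibility $\lambda \circ \theta = \phi \circ \iota(\cdot,0)$ to hold \emph{exactly} in the target $E$; since $\phi$ is valued in $\prod/\bigoplus$ while the lifted $\lambda$ is valued in $\prod$, the equation holds only modulo $\bigoplus$, so one cannot plug things in naively. I plan to bridge this in two passes: first apply Theorem~\ref{thm:realExtension} with $E = \prod/\bigoplus$ and $\bar\lambda$ in place of $\lambda$ to produce an intermediate extension $\bar\psi \colon C(\torusn[2n],\tau) \to \prod/\bigoplus$ with $\bar\psi \circ \alpha^{[2n]} = \phi$; then use the pushout decomposition of $C(\torusn[2n],\tau)$ over $C(\torus,\tau) \leftarrow A \oplus A^{\mathrm{op}} \to A_1 \oplus A_1^{\mathrm{op}}$ (the left-most square in Theorem~\ref{thm:realExtension}'s proof, a pushout by \cite[Theorem 5.4]{LoringSorensenTimeReversal}) to splice the honest lift $\lambda$ on the $A_1 \oplus A_1^{\mathrm{op}}$ factor into $\bar\psi$, producing a $\psi$ landing in $\prod(M_{d_n},\tau_0)$. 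The delicate point I expect to dominate the proof is verifying that the splicing respects $\tau$-equivariance across the pushout glue, so that $\psi$ is genuinely a $*$-$\tau$-homomorphism and not merely a $*$-homomorphism.
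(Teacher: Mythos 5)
The core of your plan --- apply Theorem~\ref{thm:realExtension} to the $\tau$-equivariant short exact sequences built from small discs around the chosen points and the annular collars of the opened holes, with the $K$-theory hypothesis checked via Propositions~\ref{prop:noKTheory} and~\ref{prop:twoDimExtension} --- is exactly the paper's strategy. But the ``two-pass'' superstructure you built on top of it rests on a misreading of what the theorem asks for. The codomain $\prod (M_{d_n},\tau_0)$ written in the statement is a typo: the equation $\psi \circ \alpha^{[2n]} = \phi$ forces $\psi$ and $\phi$ to share a codomain, the paper's own proof of Theorem~\ref{thm:twoDimHoles} only ever extends $\phi$ \emph{inside} the quotient, and its proof of Theorem~\ref{mainthm:lifting} explicitly records ``By Theorem~\ref{thm:twoDimHoles} we can extend $\phi$ to a $*$-$\tau$-homomorphism $\psi \colon C(\torusn[2n],\tau) \to (Q,\tau)$'' with $Q = \prod/\bigoplus$. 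Nothing is lifted to $\prod (M_{d_n},\tau_0)$ at this stage; the honest lifting happens later, after retracting onto a zero-dimensional space. Once you take $E = \prod/\bigoplus$, the $\lambda$ required in Theorem~\ref{thm:realExtension} is exactly the complex extension $\bar\lambda \colon A_1 \to \prod/\bigoplus$ supplied by Proposition~\ref{prop:twoDimExtension}, and your ``first pass'' is already the complete argument. The cone-projectivity lift and the splicing step are solutions to a problem that is not there.

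Moreover, the second pass would not work on its own terms even if it were needed. The pushout presentation of $C(\torusn[2n],\tau)$ from \cite[Theorem 5.4]{LoringSorensenTimeReversal} has legs $A_1 \oplus A_1^{\mathrm{op}}$ and $C(\torus,\tau)$ glued over $A \oplus A^{\mathrm{op}}$. The universal property produces a map out of the pushout into $\prod (M_{d_n},\tau_0)$ only from a \emph{pair} of compatible maps, one on each leg. You constructed a lift $\lambda$ on the ideal leg $A_1$, but you never have --- and cannot at this stage obtain --- a lift of $\phi$ on the $C(\torus,\tau)$ leg into $\prod (M_{d_n},\tau_0)$; producing such a lift is the point of the entire lifting program, not something available as an input here. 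So the claimed splicing of $\lambda$ into $\bar\psi$ is not a well-posed operation.

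Modulo that, your choice to open all $2n$ holes simultaneously, with $A$ and $A_1$ taken as direct sums over the $n$ orbit pairs, is a genuine and correct variation on the paper's proof, which instead opens holes two at a time in an induction. Your version avoids the paper's step of re-verifying that $K_0(\phi_{2k} \circ \iota_{2k}) = 0$ after each pair of holes has been opened, at the cost of applying Proposition~\ref{prop:twoDimExtension} summand-by-summand with the $K$-theory hypothesis checked for each summand via Proposition~\ref{prop:noKTheory}. That part of your argument is sound and slightly cleaner than the paper's induction.
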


\begin{figure}
\begin{center}
\includegraphics[clip,scale=1.5]{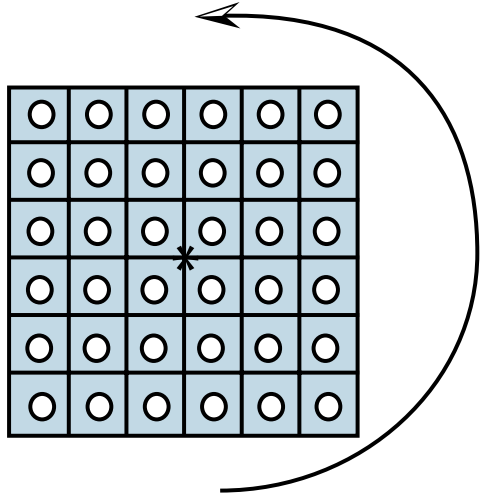}
\caption{
The result of perforating the torus in pairs so the involution lifts to
the resulting space.
\label{fig:nHoleTorus}
}
\end{center}
\end{figure}

\begin{proof}
It is clear that $\tau$ defines a reflection on $C(\torusn[2m])$ for all $1 \leq m \leq n$ and that the $*$-homomorphisms $\alpha^{[2m]} \colon C(\torus, \tau) \to C(\torusn[2m], \tau)$ given by collapsing the holes to points will all be $\tau$-preserving. 

We will open up two holes at a time.
We begin by opening holes at $x_1$ and $y_1$. 
Let $A$ denote the half open annulus, let $\cU$ be the open unit disc, and let $\theta$ be the map that collapses the inner circle of the annulus to a point, all as in Lemma \ref{lem:twoDimAlphaIsProper}. 
Denote by $\overline{\theta}$ the map $\theta \oplus \theta^{\mathrm{op}} = \theta \oplus \theta$. 
We have the following commutative diagram, of $C^*$-algebras, with exact rows
\[
	\xymatrix{
		0 \ar[r] & C_0(A) \oplus C_0(A) \to \ar[r]^-{\iota_2} & C(\torusn[2]) \ar[r] & D \ar[r] & 0 \\
		0 \ar[r] & C_0(\cU) \oplus C_0(\cU) \ar[u]^{\bar{\theta}} \ar[r]_-{\iota} & C(\torus) \ar[u]^{\alpha^{[2]}} \ar[r] & D \ar@{=}[u] \ar[r] & 0
	}
\]
The ideal inclusions put small open discs around the chosen points and the holes, respectively. 
If we give the ideals the reflection that flips the two summands we get a commutative diagram of $C^{*,\tau}$-algebras. 
By Lemma \ref{lem:twoDimAlphaIsProper} $\theta$ is proper, so by Theorem \ref{thm:realExtension} we can extend $\phi$ if we can extend $\phi \circ \iota$ restricted to one summand to $C_0(A)$. 
This in turn can be done by Proposition \ref{prop:twoDimExtension} since the relevant $K$-theory vanishes by Proposition \ref{prop:noKTheory}. 

Suppose now we have already opened up $2k$ holes for some $k \geq 1$, and denote the extension of $\phi$ to $C(\torusn[2k])$ by $\phi_{2k}$.
We can extend $\phi_{2k}$, and hence $\phi$, to $C(\torusn[2(k+1)])$ using the same techniques we used to open the first two holes, if we can show that $\phi_{2k} \circ \iota_{2k}$ has trivial $K$-theory when restricted to one summand.  
Define an inclusion $\kappa \colon C_0(\cU) \to C(\torus)$ such that the following diagram commutes
\[
	\xymatrix{
		& C(\torusn[2k]) \ar[dr]^{\phi_{2k}} & \\
		C_0(\cU) \ar[ur]^{\iota_{2k}} \ar[dr]_{\kappa} & & \prod (M_{d_n}, \tau) / \bigoplus (M_{d_n}, \tau) \\
		& C(\torus) \ar[ur]_{\phi} \ar[uu]^{\alpha^{[2k]}}&
	}
\]
where we have abused notation slightly, and used $\iota_{2k}$ to denote the restriction of $\iota_{2k}$. 
By proposition \ref{prop:noKTheory} we have that $K_0(\phi \circ \kappa) = 0$, so 
\[
	K_0(\phi_{2k} \circ \iota_{2k}) = K_0(\phi_{2k} \circ \alpha^{[2k]} \circ \kappa) = K_0(\phi \circ \kappa) = 0.
\]
Hence we can continue to extend for as many steps as we want.
\end{proof}

\begin{rema} \label{rem:anyTwoDim}
Note that in the proof of Theorem \ref{thm:twoDimHoles} we only used at two points that we were dealing with the torus. 
Most importantly, we used  that the torus is a two dimensional CW complex, so that we could make sense of ``opening up holes'' in it. 
Secondly, we applied Proposition \ref{prop:noKTheory} to see that the $K$-theory of the relevant $*$-homomorphisms vanished. 
Hence the techniques outlined in the proof can be used to ``open up holes'' in other two dimensional CW complexes, so long as we know that the $*$-homomorphisms we want to extend have trivial $K$-theory. 
\end{rema}

\subsection{One dimensional case}

\begin{thm} \label{thm:oneDimHoles}
Let $X$ be a one dimensional CW complex, and let $\tau$ be a reflection on $X$. 
Suppose that $\{x_1,x_2, \ldots, x_n, y_1, y_2, \ldots, y_n \}$ are $2n$ distinct points in the interior of $1$-cells of X chosen so that $\tau(x_i) = y_i$, for $1 \leq i \leq n$.
Let $X^{[2n]}$ be as $X$ but where we have opened up gaps at our $2n$ points. 
By the choice of points, $\tau$ will also define a reflection on $X^{[2n]}$, and the map $\alpha^{[2n]} \colon C(X, \tau) \to C(X^{[2n]}, \tau)$, given by collapsing the gaps to points, will be a $*$-$\tau$-homomorphism.
Given a sequence of natural numbers $(d_n)$ and a $*$-$\tau$-homomorphism $\phi \colon C(X, \tau) \to \prod (M_{d_n}, \tau_0) / \bigoplus (M_{d_n}, \tau_0)$, where $\tau_0$ is the transpose or dual map, there exists a $*$-$\tau$-homomorphism $\psi \colon C(X^{[2n]},\tau) \to \prod (M_{d_n}, \tau_0)$ such that $\psi \circ \alpha^{[2n]} = \phi$. 
\end{thm}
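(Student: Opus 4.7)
The plan is to mirror the two-dimensional argument of Theorem \ref{thm:twoDimHoles} in the one-dimensional setting, opening the $\tau$-paired gaps $(x_i, y_i)$ two at a time. Two features simplify the one-dimensional case: there is no $K$-theoretic obstruction, so Proposition \ref{prop:oneDimExtension} can replace Proposition \ref{prop:twoDimExtension} unconditionally; and the resulting one-dimensional algebras are rigid enough to upgrade an extension into the quotient to a genuine lift into the product.

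For the first pair, choose a small open interval neighborhood $\cU \cong (0,1)$ of $x_1$ inside its $1$-cell and let $A \cong (0,1] \cup [2,3)$ be the half-open set obtained by removing a small closed subinterval about $x_1$. By Lemma \ref{lem:alphaIsProper}, the collapse map $\theta\colon C_0(\cU) \to C_0(A)$ is proper. Since $\tau(x_1) = y_1$, the involution exchanges this neighborhood with the analogous one at $y_1$, yielding a commutative diagram of $C^{*,\tau}$-algebras with exact rows
\[
	\xymatrix{
		0 \ar[r] & C_0(A) \oplus C_0(A) \ar[r]^-{\iota_1} & C(X^{[2]}, \tau) \ar[r] & D \ar[r] & 0 \\
		0 \ar[r] & C_0(\cU) \oplus C_0(\cU) \ar[u]^{\bar{\theta}} \ar[r]_-{\iota} & C(X, \tau) \ar[u]^{\alpha^{[2]}} \ar[r] & D \ar@{=}[u] \ar[r] & 0
	}
\]
where both ideals carry the summand-flipping $\tau$-operation. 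Theorem \ref{thm:realExtension} then reduces extending $\phi$ through $\alpha^{[2]}$ to a $*$-homomorphism extension of the one-summand restriction of $\phi \circ \iota$ from $C_0(\cU)$ to $C_0(A)$, which Proposition \ref{prop:oneDimExtension} supplies unconditionally. Iterating $n$ times produces a $*$-$\tau$-homomorphism $\tilde{\psi}\colon C(X^{[2n]}, \tau) \to \prod(M_{d_n},\tau_0)/\bigoplus(M_{d_n},\tau_0)$ with $\tilde{\psi} \circ \alpha^{[2n]} = \phi$.

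To promote $\tilde{\psi}$ to a genuine lift $\psi\colon C(X^{[2n]},\tau) \to \prod(M_{d_n},\tau_0)$ with $\pi \circ \psi \circ \alpha^{[2n]} = \phi$, I would exploit that $X^{[2n]}$ is one-dimensional: $C(X^{[2n]})$ is generated by finitely many self-adjoints together with one unitary for each remaining loop. Self-adjoints lift to self-adjoints of the same norm through any surjective $*$-homomorphism, and every unitary in $\prod(M_{d_n},\tau_0)/\bigoplus(M_{d_n},\tau_0)$ admits a self-adjoint logarithm, as noted after Lemma \ref{lem:alphaIsProper}; the self-adjoint lift of that logarithm exponentiates to a unitary lift in the product. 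The main obstacle is carrying out these lifts $\tau$-equivariantly. Since the involution on $X^{[2n]}$ exchanges generators in $\tau$-paired components and acts by reflection on any $\tau$-invariant components, I would choose a generating set adapted to the involution, lift one representative of each $\tau$-orbit freely and define the other by applying $\tau$, while on any $\tau$-fixed generator the standard averaging argument replaces an arbitrary lift by a $\tau$-invariant one. This produces the desired $\tau$-equivariant $\psi$ and completes the proof.
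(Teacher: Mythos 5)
Your first two paragraphs reproduce exactly the argument the paper intends (and compresses into one sentence): open the $\tau$-paired gaps two at a time by applying Theorem~\ref{thm:realExtension} to a commuting ladder with ideals carrying the summand-flip reflection, invoking Lemma~\ref{lem:alphaIsProper} for properness and Proposition~\ref{prop:oneDimExtension} for the one-sided complex extension, and repeat. Since there is no $K$-theoretic hypothesis to verify, the iteration goes through unconditionally. That part is correct and matches the paper.

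Your third paragraph, however, should be dropped. The target of $\psi$ in the statement is almost certainly a typo for the quotient $\prod(M_{d_n},\tau_0)/\bigoplus(M_{d_n},\tau_0)$: the same target appears in Theorem~\ref{thm:twoDimHoles}, yet in both the proof of Theorem~\ref{thm:twoDimHoles} and the proof of Theorem~\ref{mainthm:lifting} the conclusion actually used is an extension into $Q$, after which the paper retracts to a finite discrete set \emph{and only then} lifts using semiprojectivity of $(\bbC,\id)$ and the flipped $\bbC\oplus\bbC$. Reading the statement literally, as you did, puts you in the position of having to lift $\tilde\psi\colon C(X^{[2n]},\tau)\to Q$ to the product; but that is tantamount to proving $C(X^{[2n]},\tau)$ is (weakly) semiprojective as a $C^{*,\tau}$-algebra, which is precisely the point the authors flag as unknown (``Past works have had the advantage of knowing that the relevant one dimensional CW complex are semiprojective. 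In our case, we do not know that''). Your sketch of lifting generators one at a time and symmetrizing by $\tau$ does not close this gap: it lifts each generator but gives no mechanism for preserving the relations they jointly satisfy (adjacency of $1$-cells, norm and commutation constraints), and $\tau$-averaging a lift will in general destroy whatever relations an uncontrolled choice happened to satisfy. The intended theorem needs only the extension into $Q$, which your first two paragraphs already deliver.
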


The proof of Theorem \ref{thm:oneDimHoles} is very similar to the proof of Theorem \ref{thm:twoDimHoles}, so we will skip it. 
The main differences are that we refer to Lemma \ref{lem:alphaIsProper} instead of lemma \ref{lem:twoDimAlphaIsProper}, to Proposition \ref{prop:oneDimExtension} instead of Proposition \ref{prop:twoDimExtension}, and that we do not have to worry about $K$-theory.

\section{Proofs of the main theorems}

We will now prove out main theorems. 
Our strategy of opening up holes and retracting to lower dimensional spaces follows past work, see for instance the proof of \cite[Theorem 19.2.7]{LoringBook}.

Consider the torus with its usual CW structure, that is a square where we identify opposite edges. 
Replace the two cell in the CW structure for the torus with an evenly spaces grid with $2n$ holes, and call the resulting one dimensional CW complex $\Gamma_{2n}$.
We can think of $\Gamma_{2n}$ as a ``fishnet torus''. 

\begin{proof}[Proof of Theorem \ref{mainthm:lifting} (and therefore Theorem \ref{mainthm:perturbation})]
Suppose $(d_n) \subseteq \bbN$ and 
\[
	\phi \colon C(\torus, \tau) \to \prod (M_{d_n}, \tau_0) / \bigoplus (M_{d_n}, \tau_0),
\]
are given. 
To ease notation let $(Q,\tau) = \prod (M_{d_n}, \tau_0) / \bigoplus (M_{d_n}, \tau_0)$.
By techniques similar to those used in \cite[Theorem 3.3]{EilersLoringComputingContingencies} we see that it suffices to find an approximate lift for a finite generating set of $C(\torus)$. 
We think of the torus as a square with opposing sides identified and pick, for some large $n$, $2n$ evenly distributed distinct points $\{x_1, x_2, \ldots, x_n, y_1, y_2, \ldots, y_n \}$ such that $\tau(x_i) = y_i$, and such that the points are the center points of the grid $\Gamma_{2n}$.

By Theorem \ref{thm:twoDimHoles} we can extend $\phi$ to a $*$-$\tau$-homomorphism $\psi \colon C(\torusn[2n], \tau) \to (Q,\tau)$. 
Choosing $n$ big enough we only make a small mistake, when we retract $C(\torusn[2n], \tau)$ onto $\Gamma_{2n}$. 
So we reduce the lifting problem to lifting a $*$-$\tau$-homomor-phism $\lambda \colon C(\Gamma_{2n},\tau) \to (Q,\tau)$.

We now use Theorem \ref{thm:oneDimHoles} to open up gaps between notes in $\Gamma_{2n}$, and then we again only make a small mistake when we retract onto the notes. 
This leaves us with the problem of lifting a $*$-$\tau$-homomorphism $\chi \colon C(Y, \tau) \to (Q, \tau)$, where $Y$ is a finite discreet set of points, and where $\tau$ flips the points pairwise and potentially fixes a single point. 
Since semiprojectivity of $C^{*,\tau}$-algebras is closed under direct sums, we can lift $\chi$ as both $(\bbC, \id)$ and $\bbC \oplus \bbC$ with the reflection that flips the two summands are semiprojective $C^{*,\tau}$-algebras, see \cite{LoringSorensenLinsTheorem}. 
\end{proof}

As noted in Remark \ref{rem:anyTwoDim}, we can ``open up holes'' in many two dimensional CW complexes under suitable $K$-theory conditions, we exploit that to prove Theorem \ref{thm:Lin's}.

\begin{proof}[Proof of Theorem~\ref{thm:Lin's}]
The underlying involutive space for problems involving real, normal contractions
is the unit disk with the flip accross the $x$-axis.  After we rephrase this
as a lifting problem, we are free to instead consider $[-1,1]\times [-1,1]$
with the flip accross the $x$-axis.  This is homotopic, as a symmetric space,
to a point, so there is no $K$-theory to worry about.

We consider a grid of points that occur in pairs with one point above, one below, the $x$-axis.  We then are faced with a graph in which most edges
are swapped in pairs by the involution, except that the edges on the $x$-axis
are fixed.  Opening up all the pairs of edges and retracting, we have
reduced to a lifting problem involving the symmetric space consisting of a
line that is fixed and many pairs of points that are swapped.  The corresponding
$C^{*,\tau}$-algebra is easily seen to be semiprojective.
\end{proof}



\end{document}